\newcommand{\coloneq}{\mathrel{\vcenter{\baselineskip0.5ex \lineskiplimit0pt
                     \hbox{\scriptsize.}\hbox{\scriptsize.}}}%
                     =}
\newcommand\Ccal{\mathcal C}
\newcommand\Mcal{\mathcal M}
\newcommand\Pcal{\mathcal P}
\newcommand\Nbb{\mathbb N}
\newcommand\Qbb{\mathbb Q}
\newcommand\Rbb{\mathbb R}
\newcommand\loc{\mathrm{loc}}
\newcommand\ACC{\mathrm{ACC}}
\newcommand\AC{\mathrm{AC}}
\newcommand\C{\mathrm{c}}
\newcommand\upto{\nearrow}
\newcommand\NX{{N^1\!X}}
\newcommand\NhX{{\widehat{N}^1\!X}}
\newcommand\NtX{{\widetilde{N}^1\!X}}
\newcommand\eps{\varepsilon}
\newcommand\Mod{\mathop{\mathrm{Mod}}\nolimits}
\renewcommand{\theenumi}{(\alph{enumi})}
\theoremstyle{plain}
\newtheorem{theorem}{Theorem}[section]
\newtheorem{lemma}[theorem]{Lemma}
\newtheorem{proposition}[theorem]{Proposition}
\newtheorem{corollary}[theorem]{Corollary}
\theoremstyle{definition}
\newtheorem{definition}[theorem]{Definition}
\newtheorem{example}[theorem]{Example}
\newtheorem{remark}[theorem]{Remark}
\numberwithin{equation}{section}
\begin{document}
%
%
%
%
\begin{abstract}
In this paper, first-order Sobolev-type spaces on abstract metric measure spaces are defined using the notion of (weak) upper gradients, where the summability of a function and its upper gradient is measured by the ``norm'' of a quasi-Banach function lattice. This approach gives rise to so-called Newtonian spaces. Tools such as moduli of curve families and Sobolev capacity are developed, which allows us to study basic properties of these spaces. The absolute continuity of Newtonian functions along curves and the completeness of Newtonian spaces in this general setting are established.
\end{abstract}
%
%
%
%
\title[Newtonian spaces based on quasi-Banach function lattices]{Newtonian spaces\\based on quasi-Banach function lattices}
\author{Luk\'{a}\v{s} Mal\'{y}}
\date{August 20, 2013}
\subjclass[2010]{Primary 46E35; Secondary 28A12, 30L99, 46E30.}
\keywords{Newtonian space, upper gradient, weak upper gradient, Sobolev-type space, Banach function lattice, quasi-normed space, metric measure space}
\address{Department of Mathematics\\Link\"{o}ping University\\SE-581~83~Link\"{o}ping\\Sweden}
\email{lukas.maly@liu.se}
\thanks{The author was partly supported by NordForsk Research Network ``Analysis and Applications'' grant 080151}
\maketitle
%
%
%
%
\section{Introduction}
\label{sec:intro}
The aim of this paper is to build up the basic theory of Newtonian spaces based on quasi-Banach function lattices and eventually show some interesting properties in this general setting. Newtonian spaces are first-order Sobolev-type spaces on abstract metric measure spaces. The interest in first-order analysis in metric spaces was initiated by Haj\l{}asz~\cite{Haj96} in 1996 and the area has been under intensive study ever since. It leads to exciting new results, which can be readily used also when studying functions defined on (not necessarily open) subsets of $\Rbb^n$. We refer the interested reader to, e.g., Ambrosio and Tilli~\cite{AmbTil}, Bj\"{o}rn and Bj\"{o}rn~\cite{BjoBjo}, Haj\l{}asz~\cite{Haj}, Heinonen~\cite{Hei,Hei2}, or Heinonen, Koskela, Shanmugalingam, and Tyson~\cite{HeiKosShaTys}.

If we focus on the classical definition of a Sobolev space $W^{1,p}(\Omega)$ for some open set $\Omega \subset \Rbb^n$, we can see that the Sobolev norm
\[
  \|u\|_{W^{1,p}(\Omega)} = \Big( \|u\|^p_{L^p(\Omega)} + \| \nabla u \|^p_{L^p(\Omega)}\Big)^{1/p}
\]
does not really depend on the vector of the distributional gradient $\nabla u$, but only on its modulus $|\nabla u|$. Owing to the Newton--Leibniz formula, the modulus $|\nabla u|$ can be used to estimate the difference of function values. For illustration, let $\Omega \subset \Rbb^n$, and suppose that $u\in \Ccal^1(\Omega)$ and that $\gamma: [0, l_\gamma] \to \Omega$ is a $\Ccal^1$-curve. Then,
\[
  | u(\gamma(0)) - u(\gamma(l_\gamma)) | = \Bigl| \int_0^{l_\gamma} (u\circ \gamma)'(t)\,dt\Bigr| \le \int_0^{l_\gamma} |\nabla u| |\gamma'(t)|\,dt = \int_\gamma |\nabla u|\,ds,
\]
where $ds$ denotes arc length. The upper gradients (see Section~\ref{sec:prelim}) substitute $|\nabla u|$ in the inequality above and subsequently in the Sobolev norm, giving rise to the Newtonian norm. The upper gradients were introduced in Heinonen and Koskela~\cite{HeiKos0,HeiKos}. Since the upper gradients, unlike the distributional gradients, do not rely on the linear structure of $\Rbb^n$, they can be used to define first-order Sobolev-type spaces on abstract measure spaces.

Shanmugalingam pioneered this approach in~\cite{Sha} to study the Newtonian spaces corresponding to the Sobolev spaces $W^{1,p}$. Bj\"{o}rn and Bj\"{o}rn~\cite{BjoBjo} gave a thorough treatise on these spaces, including their applications in non-linear potential theory. Durand-Cartagena~\cite{Dur} investigated the case $p=\infty$. Tuominen~\cite{Tuo} and A\"{\i}ssaoui~\cite{Ais} generalized the theory so that the underlying function space would be a reflexive Orlicz space. Harjulehto, H\"{a}st\"{o}, and Pere  further developed the theory in~\cite{HarHasPer}, where they discussed the Newtonian spaces based on Orlicz--Musielak variable exponent spaces, where the exponent function was essentially bounded. Mocanu~\cite{Moc2} worked with Banach function spaces as defined in Bennett and Sharpley~\cite[Definition I.1.3]{BenSha}. The paper~\cite{Moc2} however suffers from improper work with equivalence classes, which eventually leads to invalidity of some of the claims therein. Some of the results there also rely on uniform convexity of the function space which considerably lessens the generality.
The latest attempt to discuss the foundations of the Newtonian theory is due to Costea and Miranda~\cite{CosMir} who used the Lorentz $L^{p,q}$ spaces as the underlying function spaces.
For detailed historical notes on the development of the Newtonian theory and its toolbox, we refer the reader to Bj\"{o}rn and Bj\"{o}rn~\cite[Section 1.8]{BjoBjo}.

The present paper develops elements of an omnibus Newtonian theory that encompasses all these results and goes even further. Under very weak assumptions on the measure and the function space, we establish standard tools for the theory. We prove that the natural equivalence classes are in general finer than equality almost everywhere. Further, we will see that Newtonian functions satisfy and can be characterized by a regularity condition in terms of absolute continuity on curves. We also show that the Newtonian space is in fact a (quasi)Banach space. Finer properties of the set of weak upper gradients are then studied in~\cite{Mal2}. Particularly, existence of minimal weak upper gradients is established there.

There are other possible generalizations of Sobolev spaces to metric measure spaces, based on different characterizations of the distributional gradient. For comparison of these approaches, see Haj\l{}asz~\cite{Haj96,Haj}, Bj\"{o}rn and Bj\"{o}rn~\cite[Appendix~B]{BjoBjo}, or Heinonen, Koskela, Shanmugalingam, and Tyson~\cite[Chapter~9]{HeiKosShaTys}.

The paper is structured in the following way. In Section~\ref{sec:prelim}, we define the quasi-Banach function lattices and the Newtonian spaces based on them. We also show that Newtonian functions form a quasi-normed lattice. Section~\ref{sec:cap} is devoted to the Sobolev capacity and its fundamental properties. Then, we introduce the moduli of curve families in Section~\ref{sec:mod}, which leads to the notion of weak upper gradients that is established and studied in Section~\ref{sec:wug}. We prove that Newtonian functions are absolutely continuous on almost every curve and we discuss the equivalence classes in the Newtonian space in Section~\ref{sec:acc}. Finally, we show that the space of Newtonian functions is complete and we prove a Egorov-type theorem in Section~\ref{sec:banach}.
%
%
%
%
\section{Preliminaries}
\label{sec:prelim}
We assume throughout the paper that $\Pcal = (\Pcal, d, \mu)$ is a metric measure space equipped with a metric $d$ and a $\sigma$-finite Borel regular measure $\mu$. In our context, Borel regularity means that all Borel sets in $\Pcal$ are $\mu$-measurable and for each $\mu$-measurable set $A$ there is a Borel set $D\supset A$ such that $\mu(D) = \mu(A)$. The connection between $d$ and $\mu$ is given by the condition that every ball in $\Pcal$ has finite positive measure. Let $\Mcal(\Pcal, \mu)$ denote the set of all extended real-valued $\mu$-measurable functions on $\Pcal$. The set of extended real numbers, i.e., $\Rbb \cup \{\pm \infty\}$, will be denoted by $\overline{\Rbb}$. The symbol $\Nbb$ will denote the set of positive integers, i.e., $\{1,2, \ldots\}$. The open ball centered at $x\in \Pcal$ with radius $r>0$ will be denoted by $B(x,r)$.

A linear space $X = X(\Pcal, \mu)$ of equivalence classes of functions in $\Mcal(\Pcal, \mu)$ is said to be a \emph{quasi-Banach function lattice} (further abbreviated as qBFL) over $(\Pcal, \mu)$ equipped with the quasi-norm $\|\cdot\|_X$ if the following axioms hold:
\begin{enumerate}
  \renewcommand{\theenumi}{(P\arabic{enumi})}
	\setcounter{enumi}{-1}
  \item \label{df:qBFL.initial} $\|\cdot\|_X$ determines the set $X$, i.e., $X = \{u\in \Mcal(\Pcal, \mu)\colon \|u\|_X < \infty\}$;
  \item \label{df:qBFL.quasinorm} $\|\cdot\|_X$ is a \emph{quasi-norm}, i.e., 
  \begin{itemize}
    \item $\|u\|_X = 0$ if and only if $u=0$ a.e.,
    \item $\|au\|_X = |a|\,\|u\|_X$ for every $a\in\Rbb$ and $u\in\Mcal(\Pcal, \mu)$,
    \item there is a constant $c\ge 1$, the so-called \emph{modulus of concavity}, such that the inequality $\|u+v\|_X \le c(\|u\|_X+\|v\|_X)$ holds for all $u,v \in \Mcal(\Pcal, \mu)$;
  \end{itemize}
  \item $\|\cdot\|_X$ satisfies the \emph{lattice property}, i.e., if $|u|\le|v|$ a.e., then $\|u\|_X\le\|v\|_X$;
    \label{df:BFL.latticeprop}
  \renewcommand{\theenumi}{(RF)}
  \item \label{df:qBFL.RF} $\|\cdot\|_X$ satisfies the \emph{Riesz--Fischer property}, i.e., if $u_n\ge 0$ a.e.\@ for all $n\in\Nbb$, then $\bigl\|\sum_{n=1}^\infty u_n \bigr\|_X \le \sum_{n=1}^\infty c^n \|u_n\|_X$, where $c\ge 1$ is the modulus of concavity. Note that the function $\sum_{n=1}^\infty u_n$ needs be understood as a pointwise (a.e.\@) sum.
\end{enumerate}
Note that $X$ contains only functions finite a.e., which follows from~\ref{df:qBFL.quasinorm} and~\ref{df:BFL.latticeprop}. In other words, if $\|u\|_X<\infty$, then $|u|<\infty$ a.e.
A quasi-Banach function lattice is normed, and thus called a \emph{Banach function lattice} (BFL) if the modulus of concavity is equal to~$1$. 

In the further text, we will slightly deviate from this rather usual definition of quasi-Banach function lattices. Namely, we will consider $X$ to be a linear space of functions defined everywhere instead of equivalence classes defined a.e. Then, the functional $\|\cdot\|_X$ is really only a quasi-seminorm.

Throughout the paper, we will also assume that the quasi-norm $\|\cdot\|_X$ is \emph{continuous}, i.e., if $\|u_n - u\|_X \to 0$ as $n\to\infty$, then $\|u_n\|_X \to \|u\|_X$. The continuity of $\|\cdot\|_X$ in normed spaces follows from the triangle inequality. On the other hand, if the space $X$ is merely quasi-normed, then there is an equivalent continuous quasi-norm due to the Aoki--Rolewicz theorem, see Proposition H.2 in Benyamini and Lindenstrauss~\cite{BenLin}. Its proof shows that such an equivalent quasi-norm retains the lattice property.

It is worth noting that the Riesz--Fischer property is actually equivalent to the completeness of the quasi-normed space $X$, given that the conditions \ref{df:qBFL.initial}--\ref{df:BFL.latticeprop} are satisfied and the quasi-norm is continuous, see Zaanen~\cite[Lemma 101.1]{Zaa}, where the equivalence for normed function lattices is discussed but the proof works even in the case of quasi-normed function lattices. The equivalence was first observed by Halperin and Luxemburg~\cite{HalLux} who defined the Riesz--Fischer property in a slightly different way.

Let us now take a look at some examples of function spaces to appreciate the generality of such a setting.
%
%
\begin{example}
(a) All \emph{(quasi)Banach function spaces}, further abbreviated as (q)BFS, are trivially (q)BFL's, as they satisfy not only \ref{df:qBFL.initial}--\ref{df:BFL.latticeprop}, but also the following three axioms:
\begin{enumerate}
  \renewcommand{\theenumi}{(P\arabic{enumi})}
	\setcounter{enumi}{2}
  \item $\|\cdot\|_X$ satisfies the \emph{Fatou property}, i.e., if $0\le u_n \upto u$ a.e., then $\|u_n\|_X\upto\|u\|_X$;
  \item \label{df:BFS.L8loc} if a measurable set $E \subset \Pcal$ has finite measure, then $\|\chi_E\|_X < \infty$;
  \item \label{df:BFL.locL1} for every measurable set $E\subset \Pcal$ of a finite measure there is $C_E>0$ such that $\int_E |u|\,d\mu \le C_E \|u\|_X$ for every measurable function $u$.
\end{enumerate}
Note that the Fatou property implies the Riesz--Fischer property. Condition~\ref{df:BFL.locL1} describes that $X$ is continuously embedded into $L^1_\loc(\Pcal, \mu)$. As particular examples of BFS's we can list $L^p(\Pcal, \mu)$ spaces if $p\in[1, \infty]$, the variable exponent spaces $L^{p(\cdot)}(\Pcal, \mu)$ for $p:\Pcal \to [1, \infty]$, Orlicz spaces, Lorentz and Marcinkiewicz spaces. For a detailed treatise on Banach function spaces, see Bennett and Sharpley~\cite{BenSha}.

(b) $L^p(\Pcal, \mu)$ spaces, where $0<p<1$, are qBFL's, but not qBFS's as they fail the local embedding into $L^1$.

(c) The spaces $L^1(\Pcal, \mu) \cap L^p(\Pcal, \mu)$, where $0<p<1$, are qBFS's. The quasi-norm is given as $\|\cdot\|_{L^1} + \|\cdot\|_{L^p}$. If $\mu(\Pcal) = \infty$, then these spaces are not normable. On the other hand, if $\mu(\Pcal) < \infty$, then the quasi-norm is equivalent to the $L^1$ norm. The functions lying in this space have peaks controlled by the $L^1$ norm, whereas their rate of decay ``at infinity'' is controlled by the $L^p$ norm.

(d) $L^p(\Pcal, \mu)$ spaces, where $p\in(0, \infty]$, with an additional condition that forces the function value to be zero at some point $x_0 \in \Pcal$, e.g.,
\[
  \| u \|_X = \|u\|_{L^p(\Pcal, \mu)} + \sum_{k=1}^\infty \frac{1}{\mu(B(x_0, 2^{-k}))} \int_{B(x_0, 2^{-k})} |u|\,d\mu,
\]
are (q)BFL's, but not (q)BFS's as they fail~\ref{df:BFS.L8loc}, i.e., they do not contain characteristic functions of all measurable sets of finite measure.

(e) The weak $L^1$ space, also denoted by $L^{1,\infty}(\Pcal, \mu)$, is a qBFL, but not a qBFS as it fails the local embedding into $L^1$.

(f) Spaces of continuous, differentiable, or Sobolev functions are not BFL's as they fail to comply with the lattice property.
\end{example}
The readers interested in the abstract theory of partially ordered linear spaces are referred to Luxemburg and Zaanen~\cite{LuxZaa} and Zaanen~\cite{Zaa}, where normed function lattices, among other things, are discussed.

By a \emph{curve} in $\Pcal$ we will mean a rectifiable non-constant continuous mapping from a
compact interval. Thus, a curve can be (and we will always assume that all curves are) parametrized by arc length $ds$, see e.g.\@ 
Heinonen~\cite[Section~7.1]{Hei}. Note that every curve is Lipschitz continuous with respect to its arc length parametrization. The family of all non-constant rectifiable curves in $\Pcal$ will be denoted by $\Gamma(\Pcal)$. By abuse of notation, the image of a curve $\gamma$ will also be denoted by $\gamma$. 

Now, we shall introduce the upper gradients, which are used as a substitute for the modulus of the usual weak gradient in the definition of Newtonian spaces. They were originally introduced by Heinonen and Koskela in~\cite{HeiKos0,HeiKos} under the name very weak gradients.
%
%
\begin{definition}
\label{df:ug}
  Let $u: \Pcal \to \overline{\Rbb}$. Then, a Borel function $g: \Pcal \to [0, \infty]$ is called an \emph{upper gradient} of $u$ if
\begin{equation}
 \label{eq:ug_def}
 |u(\gamma(0)) - u(\gamma(l_\gamma))| \le \int_\gamma g\,ds
\end{equation}
for all curves $\gamma: [0, l_\gamma]\to\Pcal$. To make the notation easier, we are using the convention that $|(\pm\infty)-(\pm\infty)|=\infty$.
\end{definition}
Observe that the upper gradient of a function is by no means given uniquely. Indeed, if we have a function $u$ and its upper gradient $g$, then $g+h$ is another upper gradient of $u$ whenever $h$ is a non-negative Borel function.

The following lemma shows that we can easily find an upper gradient of a linear combination of functions whose upper gradients are known.
%
%
\begin{lemma}
\label{lem:ug.linearity}
Let $g$ and $h$ be upper gradients of $u$ and $v$, respectively, and $a\in\Rbb$. Then, $|a|g$ and $g + h$ are upper gradients of $au$ and $u + v$, respectively.
\end{lemma}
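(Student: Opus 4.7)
My plan is to verify the defining inequality \eqref{eq:ug_def} directly for each of the two candidate upper gradients, treating an arbitrary fixed curve $\gamma\colon[0,l_\gamma]\to\Pcal$ and then taking cases on whether the endpoint values of $u,v$ are finite or infinite. Borel measurability of $|a|g$ and of $g+h$ is immediate from the Borel measurability of $g$ and $h$ (scalar multiplication and pointwise addition of extended-real Borel functions), so the only real content is the endpoint inequality.

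For the first claim, I would observe that if $a=0$ then $au\equiv 0$ and $|a|g\equiv 0$, so \eqref{eq:ug_def} reduces to $0\le 0$. If $a\ne 0$, then
\[
  |au(\gamma(0))-au(\gamma(l_\gamma))| = |a|\,|u(\gamma(0))-u(\gamma(l_\gamma))| \le |a|\int_\gamma g\,ds = \int_\gamma |a|g\,ds,
\]
where the middle inequality is the upper-gradient property of $g$; the convention $|(\pm\infty)-(\pm\infty)|=\infty$ is handled by noting that whenever $u(\gamma(0))$ or $u(\gamma(l_\gamma))$ equals $\pm\infty$, the upper-gradient property forces $\int_\gamma g\,ds=\infty$, and therefore $\int_\gamma|a|g\,ds=\infty$ as well for $a\ne 0$.

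For the second claim, the triangle inequality gives
\[
  |(u+v)(\gamma(0)) - (u+v)(\gamma(l_\gamma))| \le |u(\gamma(0))-u(\gamma(l_\gamma))| + |v(\gamma(0))-v(\gamma(l_\gamma))| \le \int_\gamma g\,ds + \int_\gamma h\,ds = \int_\gamma(g+h)\,ds,
\]
and additivity of the arc-length integral closes the argument. The only subtlety, and the main thing to check, is that the convention on differences of infinities is respected at the endpoints: whenever any of $u(\gamma(0)), u(\gamma(l_\gamma)), v(\gamma(0)), v(\gamma(l_\gamma))$ is infinite, the corresponding integral $\int_\gamma g\,ds$ or $\int_\gamma h\,ds$ is already infinite by hypothesis, hence $\int_\gamma(g+h)\,ds=\infty$ and the inequality is trivially satisfied regardless of how $u+v$ is interpreted at such a point. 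Since $\gamma$ was arbitrary, both statements follow.

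I do not anticipate any genuine obstacle here; the lemma is essentially a bookkeeping exercise combining linearity of the path integral, the triangle inequality on $\overline{\Rbb}$, and careful handling of the $\pm\infty$ convention introduced in Definition~\ref{df:ug}.
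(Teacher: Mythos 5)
Your proof is correct and takes the same route as the paper, which dispatches the lemma with ``This follows immediately from Definition~\ref{df:ug}''; your write-up simply supplies the details implicit in that remark (linearity of the path integral, the triangle inequality in $\overline{\Rbb}$, and the observation that any infinite endpoint value forces the corresponding integral to be infinite). The case analysis on $a=0$ and on infinite endpoint values is handled correctly, so there is nothing to add.
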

\begin{proof}
This follows immediately from Definition~\ref{df:ug}.
\end{proof}
Now that we have established upper gradients, we can define analogues of Sobolev spaces on metric measure spaces.
%
%
\begin{definition}
Whenever $u\in \Mcal(\Pcal, \mu)$, let
\[
  \|u\|_{\NX} = \| u \|_X + \inf_g \|g\|_X,
\]
where the infimum is taken over all upper gradients $g$ of $u$.
The \emph{Newtonian space} based on $X$ is the space
\[
  \NX = \NX (\Pcal, \mu)= \{u\in\Mcal(\Pcal, \mu): \|u\|_{\NX} <\infty \}.
\]
Let us point out that we assume that functions are defined everywhere, and not just up to equivalence classes $\mu$-almost everywhere. This is essential for the notion of upper gradients since they are defined by a pointwise inequality.

We also define the space of natural equivalence classes given by $\NtX = \NX / \mathord\sim$\,, where the equivalence relation $u\sim v$ is determined by $\|u-v\|_\NX = 0$.
\end{definition}
Note that we follow the notation of Bj\"{o}rn and Bj\"{o}rn~\cite{BjoBjo}, where $\NX$ denotes the space of functions defined everywhere while $\NtX$ denotes the space of equivalence classes. Some authors, e.g., Shanmugalingam~\cite{Sha}, Tuominen~\cite{Tuo} and Mocanu~\cite{Moc2}, use the corresponding symbols the other way around.

We will prove in Corollary~\ref{cor:nat.eq.classes} that the equivalence classes we have just defined are in general finer than the classes of $\mu$-almost everywhere equality.
%
%
\begin{remark}
The theory of upper gradients becomes pathological in some cases and the corresponding Newtonian spaces are rendered trivial in the sense that $\NX = X$. Obviously, if $\Pcal$ does not contain any non-constant rectifiable curves, then the zero function is an upper gradient of any function $u\in X$, and hence $\|u\|_{\NX} = \|u\|_X$. The Koch snowflake provides us with a simple example of such a metric space~$\Pcal$. This exceptional case has already been observed in older papers on Newtonian spaces. However, the following example shows that there are other situations in which $\NX$ becomes degenerate.
\end{remark}
%
%
\begin{example}
\label{exa:trivial_N1p}
Let $X = L^p([0,1])$, where $p\in(0,1)$. Suppose that the set $\{q_i: i\in\Nbb\}$ contains all rational numbers within $[0, 1]$. For $k\in\Nbb$, let
\[
  g_k (x) = \frac{1}{k} \sum_{i=1}^\infty \frac{4^{-i/p}}{|x-q_i|}\,, \quad x\in[0,1].
\]
Then, $\|g_k\|_X = \|g_1\|_X / k < \infty$ for all $k\in\Nbb$. Nevertheless, if we consider an arbitrary curve $\gamma$, then $\int_\gamma g_k = \infty$. Therefore, all $g_k$ are upper gradients of any function $u\in X$. Hence,
\[
  \|u\|_X \le \|u\|_\NX \le \|u\|_X + \frac{\|g_1\|_X}{k} \to \|u\|_X\,\quad\mbox{as }k\to\infty,
\]
which proves that $\NX = X$.
\end{example}
Note that a similar example can be produced even for $X = L^p([0,1]^n)$, where $n> 1$ and $p\in(0,1)$. In that case, let
\[
  g_k (x_1, x_2, \ldots, x_n) = \frac{1}{k} \sum_{i=1}^\infty \sum_{j=1}^n \frac{4^{-i/p}}{|x_j-q_i|}\,, \quad (x_1, x_2, \ldots, x_n) \in[0,1]^n,
\]
for any $k\in\Nbb$, where the set $\{q_i: i\in\Nbb\}$ consists of all rational numbers within the interval $[0, 1]$. Then, all $g_k\in X$ are upper gradients of any function $u\in X$.

In the following two claims, we shall see that $\NX$ is not only a linear space, but also a lattice. Furthermore, the functional $\|\cdot\|_\NX$ is a (quasi)seminorm on $\NX$.
%
%
\begin{proposition}
\label{pro:NX_norm}
The functional $\|\cdot\|_\NX$ is a seminorm on $\NX$ and a norm on $\NtX$, given that $X$ is a Banach function lattice. If $X$ is just a quasi-Banach function lattice, then $\|\cdot\|_\NX$ is a quasi-seminorm on $\NX$ and a quasi-norm on $\NtX$. Moreover, the modulus of concavity remains the same as in $X$.
\end{proposition}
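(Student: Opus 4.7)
My plan is to verify the three quasi-seminorm axioms directly, reducing each one to the corresponding axiom for $X$ together with Lemma~\ref{lem:ug.linearity}. Writing $I(u) = \inf_g \|g\|_X$, where the infimum runs over all upper gradients $g$ of $u$, we have $\|u\|_\NX = \|u\|_X + I(u)$, so it suffices to show that $I$ satisfies the same three properties with the same modulus of concavity; absolute homogeneity and the quasi-triangle inequality for $\|\cdot\|_\NX$ then follow by adding the corresponding inequalities for $\|\cdot\|_X$ and $I$.

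Non-negativity and $\|0\|_\NX = 0$ are immediate, since the zero function is an upper gradient of $0$. For absolute homogeneity, I observe that if $g$ is an upper gradient of $u$ and $a \in \Rbb$, then by Lemma~\ref{lem:ug.linearity} the function $|a|g$ is an upper gradient of $au$, so $I(au) \le |a|\,I(u)$; interchanging the roles of $u$ and $au$ (and handling $a=0$ separately, where $0$ is trivially an upper gradient of $au=0$) gives the opposite inequality, hence $I(au) = |a|\,I(u)$.

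For the quasi-triangle inequality, I fix $u, v \in \NX$ and $\eps > 0$, and pick upper gradients $g$ of $u$ and $h$ of $v$ with $\|g\|_X \le I(u) + \eps$ and $\|h\|_X \le I(v) + \eps$. By Lemma~\ref{lem:ug.linearity}, $g + h$ is an upper gradient of $u + v$, so using the quasi-triangle inequality in $X$ with modulus of concavity $c$,
\[
  I(u+v) \le \|g+h\|_X \le c\bigl(\|g\|_X + \|h\|_X\bigr) \le c\bigl(I(u) + I(v) + 2\eps\bigr).
\]
Letting $\eps \to 0$ gives $I(u+v) \le c\bigl(I(u) + I(v)\bigr)$, and combining this with $\|u+v\|_X \le c(\|u\|_X + \|v\|_X)$ yields $\|u+v\|_\NX \le c(\|u\|_\NX + \|v\|_\NX)$. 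If one of $u, v$ lies outside $\NX$, the right-hand side is $\infty$ and the inequality is vacuous. This simultaneously covers the seminorm case ($c = 1$) and shows that the modulus of concavity is preserved.

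Finally, the definition of the equivalence relation $\sim$ immediately upgrades the quasi-seminorm on $\NX$ to a quasi-norm on $\NtX$, since $\|u - v\|_\NX = 0$ is exactly the condition that $u \sim v$. I do not anticipate a substantive obstacle; the only small care points are the $a = 0$ case in homogeneity and ensuring the families of upper gradients over which the infima are taken are non-empty whenever the relevant norms are finite, both of which are handled by the ``$0$ is an upper gradient of $0$'' observation.
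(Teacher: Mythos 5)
Your proposal is correct and follows essentially the same route as the paper: both arguments pick $\eps$-near-optimal upper gradients, invoke Lemma~\ref{lem:ug.linearity} to get $|a|g$ and $g+h$ as upper gradients of $au$ and $u+v$, and then apply homogeneity and the quasi-triangle inequality of $\|\cdot\|_X$; splitting off $I(u)=\inf_g\|g\|_X$ and treating it separately before adding back $\|\cdot\|_X$ is only a cosmetic repackaging of the paper's single combined estimate. The extra care points you flag (the $a=0$ case and passage to $\NtX$) are handled correctly.
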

\begin{proof}
Let $\eps>0$, $a\in\Rbb$, and $u,v\in \NX$. Then, there are upper gradients $g,h \in X$ of $u, v$, respectively, such that
\[
  \|u\|_X + \|g\|_X < \|u\|_\NX + \eps, \quad\mbox{and}\quad \|v\|_X + \|h\|_X < \|v\|_\NX + \eps.
\]
Suppose $c\ge 1$ is the modulus of concavity of $X$. Lemma~\ref{lem:ug.linearity} yields that $g+h$ is an upper gradient of $u+v$. Thus,
\begin{align*}
  \|u+v\|_\NX & \le \|u+v\|_X + \|g+h\|_X \\
  & \le c (\|u\|_X + \|v\|_X + \|g\|_X + \|h\|_X) < c(\|u\|_\NX + \|v\|_\NX + 2\eps).
\end{align*}
Letting $\eps \to 0$ proves the triangle inequality. Since $|a|g$ is an upper gradient of $au$, we have
\[
  \|au\|_\NX \le \|au\|_X + \||a|g\|_X = |a|(\|u\|_X + \|g\|_X) \le |a|(\|u\|_\NX + \eps),
\]
which leads to $\|au\|_\NX \le |a|\,\|u\|_\NX$. Similarly, we obtain $\|u\|_\NX \le |a|^{-1} \|au\|_\NX$ for $a\neq 0$. Consequently, $\|au\|_\NX = |a|\,\|u\|_\NX$.
\end{proof}
%
%
\begin{remark}
\label{rem:N1rX}
If the $r$th power of $\|\cdot\|_X$ is subadditive for some $r \in (0, \infty)$, we may also consider the functional
\[
  \|u\|_{N^1_rX} = \Bigl( \|u\|_X^r + \inf_g \|g\|_X^r\Bigr)^{1/r},
\]
where the infimum is taken over all upper gradients $g$ of $u$. It is easy to see that there is a constant $c'\ge 1$ such that $\|u\|_\NX / c' \le \|u\|_{N^1_rX} \le c' \|u\|_\NX$ for all measurable functions~$u$. Similarly as in Proposition~\ref{pro:NX_norm}, we can show that $\| \cdot \|_{N^1_rX}$ is a quasi-seminorm on $\NX$ whose modulus of concavity equals the modulus of concavity of $X$. Furthermore, the $r$th power of  $\| \cdot \|_{N^1_rX}$ is subadditive as well.
\end{remark}
%
%
\begin{theorem}
\label{thm:N1X-lattice}
The space $\NX$ is a\/ \emph{lattice,} i.e., if $u,v \in \NX$, then 
\[
  \max\{u,v\},\min\{u,v\}, |u|, u^+, u^- \in \NX.
\]
\end{theorem}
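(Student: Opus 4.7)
The plan is to reduce everything to two facts about upper gradients; once these are in hand, membership in $X$ (and hence in $\NX$) follows from the lattice property of $\|\cdot\|_X$ applied to the obvious pointwise bounds $|u^{\pm}|\le|u|$, $\||u|\|_X=\|u\|_X$ and $|\max\{u,v\}|, |\min\{u,v\}|\le|u|+|v|$.

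First I would show that every upper gradient $g$ of $u$ is automatically an upper gradient of $|u|$, $u^+$, and $u^-$. This rests on the pointwise bounds $\bigl||a|-|b|\bigr|\le|a-b|$, $|a^+-b^+|\le|a-b|$, and $|a^--b^-|\le|a-b|$ on $\overline{\Rbb}$, which are immediate in the finite case and are consistent with the convention $|(\pm\infty)-(\pm\infty)|=\infty$ in the remaining boundary cases. Inserting $a=u(\gamma(0))$ and $b=u(\gamma(l_\gamma))$ into the defining inequality~\eqref{eq:ug_def} for $g$ yields the three claims at once, so $\||u|\|_\NX,\|u^\pm\|_\NX\le\|u\|_\NX<\infty$.

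Next, for $\max\{u,v\}$ and $\min\{u,v\}$ I would show that whenever $g,h$ are upper gradients of $u,v$, the sum $g+h$ is an upper gradient of both. Fix a curve $\gamma:[0,l_\gamma]\to\Pcal$ and set $f=\max\{u,v\}$. Split into cases according to which of $u,v$ attains the maximum at the two endpoints. In the ``same'' case (e.g.\ $f=u$ at both endpoints) the inequality~\eqref{eq:ug_def} for $u$ directly gives $|f(\gamma(0))-f(\gamma(l_\gamma))|\le\int_\gamma g\,ds$. In the ``switched'' case, say $f(\gamma(0))=u(\gamma(0))\ge v(\gamma(0))$ and $f(\gamma(l_\gamma))=v(\gamma(l_\gamma))\ge u(\gamma(l_\gamma))$, one estimates the positive part of $f(\gamma(0))-f(\gamma(l_\gamma))$ by $u(\gamma(0))-u(\gamma(l_\gamma))\le\int_\gamma g\,ds$ and the negative part by $v(\gamma(l_\gamma))-v(\gamma(0))\le\int_\gamma h\,ds$; either way the bound is dominated by $\int_\gamma(g+h)\,ds$. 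The argument for $\min\{u,v\}$ is symmetric, or one may invoke $\min\{u,v\}=-\max\{-u,-v\}$ and Lemma~\ref{lem:ug.linearity}. Taking infima over $g$ and $h$ and using the lattice property gives $\|\max\{u,v\}\|_\NX\le c(\|u\|_\NX+\|v\|_\NX)$, and the analogous bound for $\min$.

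The only delicate step is the case analysis with extended real values; one must check that whenever the left-hand side $|f(\gamma(0))-f(\gamma(l_\gamma))|$ is forced to $\infty$ by the convention (for instance if $f$ takes the value $+\infty$ at one endpoint and $-\infty$ at the other, which for $f=|u|$ or $u^\pm$ cannot happen, but for $\max\{u,v\}$ can), the right-hand side is also infinite because at least one of $|u(\gamma(0))-u(\gamma(l_\gamma))|$ or $|v(\gamma(0))-v(\gamma(l_\gamma))|$ is already infinite, forcing $\int_\gamma g\,ds=\infty$ or $\int_\gamma h\,ds=\infty$. This is straightforward once enumerated but is the one place where care is required.
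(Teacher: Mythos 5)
Your proposal is correct and follows essentially the same route as the paper: the key step in both is that $g+h$ is an upper gradient of $\max\{u,v\}$, with the paper then dispatching $\min\{u,v\}$, $|u|$, $u^{\pm}$ by expressing them through $\max$ and Lemma~\ref{lem:ug.linearity}. Your direct treatment of $|u|$ and $u^{\pm}$ via the $1$-Lipschitz inequalities is a minor sharpening (it shows $g$ itself, rather than $2g$, is an upper gradient), and your explicit check of the extended-value cases under the convention $|(\pm\infty)-(\pm\infty)|=\infty$ is a welcome detail the paper leaves implicit.
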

\begin{proof}
If $g,h\in X$ are upper gradients of $u,v\in\NX$, respectively, then we can easily see that $g+h$ is an upper gradient of $\max\{u,v\}$. All other functions in the theorem can be expressed using $\max$.
\end{proof}
%
%
\begin{remark}
The lattice property~\ref{df:BFL.latticeprop} of a linear function space is a stronger requirement, i.e., if a function space has the lattice property, then it is a lattice. The converse implication does not hold as can be seen, e.g., in the set of continuous functions.
\end{remark}
%
%
%
%
\section{Sobolev capacity}
\label{sec:cap}
%
%
In the theory of quasi-Banach function lattices, it is the sets of measure zero that are negligible and do not carry any information about the functions. If we move to first-order analysis within the context of Newtonian spaces, we will see that we need some quantity providing a finer distinction of small sets.
%
%
\begin{definition}
\label{df:capacity}
The \emph{(Sobolev) $X$-capacity} of a set $E\subset \Pcal$ is defined as
\[
  C_X(E) = \inf\{ \|u\|_{\NX}: u\ge 1 \mbox{ on }E\}.
\]
We say that a property of points in $\Pcal$ holds \emph{$C_X$-quasi-everywhere ($C_X$-q.e.\@)} if the set of exceptional points has $X$-capacity zero. Despite the dependence on $X$, we will often write simply \emph{capacity} and \emph{q.e.} whenever there is no risk of confusion of the underlying function space.
\end{definition}
Sometimes it is convenient to restrict the set of functions over which the infimum is taken to determine the capacity of a set.
%
%
\begin{proposition}
\label{pro:capacity_def}
Let $E\subset \Pcal$. Then,
\[
  C_X(E) = \inf\{ \|v\|_{\NX}: \chi_E \le v \le 1\}.
\]
\end{proposition}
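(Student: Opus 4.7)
The plan is to establish both inequalities. One direction is immediate: any $v$ with $\chi_E \le v \le 1$ satisfies $v \ge 1$ on $E$, so the set of admissible functions in the displayed formula is a subset of the admissible functions in Definition~\ref{df:capacity}. Taking the infimum over the smaller family can only make it larger, yielding
\[
  C_X(E) \le \inf\{ \|v\|_{\NX} : \chi_E \le v \le 1\}.
\]

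For the reverse inequality, I would use truncation. Given any admissible $u \ge 1$ on $E$ with $\|u\|_\NX < \infty$, define
\[
  v = \min\{\max\{u, 0\}, 1\}.
\]
Then $0 \le v \le 1$ everywhere, and $v \equiv 1$ on $E$, so $\chi_E \le v \le 1$. Since $|v| \le |u|$ a.e., the lattice property \ref{df:BFL.latticeprop} of $X$ gives $\|v\|_X \le \|u\|_X$.

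The key step is to show that every upper gradient $g$ of $u$ is also an upper gradient of $v$. This is because the truncation $\phi(t) = \min\{\max\{t,0\},1\}$ is $1$-Lipschitz on $\overline{\Rbb}$ (in particular $|\phi(s)-\phi(t)| \le 1$ when one of $s,t$ is $\pm\infty$, while $|\phi(s)-\phi(t)| \le |s-t|$ when both are finite). So for any curve $\gamma:[0,l_\gamma]\to\Pcal$, we have
\[
  |v(\gamma(0)) - v(\gamma(l_\gamma))| = |\phi(u(\gamma(0))) - \phi(u(\gamma(l_\gamma)))| \le |u(\gamma(0)) - u(\gamma(l_\gamma))| \le \int_\gamma g\,ds,
\]
where in the cases with infinite values the inequality holds because the right-hand side must already be $\infty$ under the convention stipulated in Definition~\ref{df:ug}. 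Hence $g$ is also an upper gradient of $v$, and consequently $\|v\|_\NX \le \|v\|_X + \|g\|_X \le \|u\|_X + \|g\|_X$. Taking the infimum over upper gradients $g$ of $u$ gives $\|v\|_\NX \le \|u\|_\NX$, and then infimizing over admissible $u$ yields the remaining inequality.

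The only mildly delicate point is handling the extended-real case for the upper gradient inequality, but this is automatic from the convention that $|(\pm\infty)-(\pm\infty)|=\infty$; everything else reduces to routine applications of the lattice property and Lemma~\ref{lem:ug.linearity}-style reasoning.
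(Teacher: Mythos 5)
Your proof is correct and takes essentially the same route as the paper: both arguments note the easy inequality, then truncate an admissible $u$ to $\max\{\min\{u,1\},0\}$, control its $X$-norm by the lattice property, and observe that any upper gradient of $u$ remains an upper gradient of the truncation. The only cosmetic difference is that the paper justifies this last point by citing the proof of Theorem~\ref{thm:N1X-lattice}, whereas you verify it directly from the $1$-Lipschitzness of the truncation map, with a careful (and correct) handling of the extended-real convention.
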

\begin{proof}
Obviously, we have $C_X(E) \le \inf\{ \|v\|_{\NX}: \chi_E \le v \le 1\}$. Thus, if $C_X(E) = \infty$, we are done. Suppose now that $C_X(E)<\infty$ and let $\eps>0$. Then, there is $u \in \NX$ with an upper gradient $g\in X$ such that $u\ge 1$ on $E$ and $\|u\|_X + \|g\|_X < C_X(E)+\eps$. Observe that $g$ is an upper gradient of $\max\{\min\{u,1\},0\}$ as can be seen from the proof of Theorem~\ref{thm:N1X-lattice}. Therefore,
\begin{align*}
  C_X(E) + \eps> \|u\|_X + \|g\|_X & \ge \| \max\{\min\{u,1\},0\} \|_X + \|g\|_X \\
  & \ge \| \max\{\min\{u,1\},0\} \|_{\NX} \ge \inf\{ \|v\|_{\NX}: \chi_E \le v \le 1\}.
\end{align*}
Letting $\eps\to 0$ finishes the proof.
\end{proof}
We also obtain an intermediate result, namely, $C_X(E) = \inf\{\|v\|_\NX: \chi_E \le v\}$.

The following lemma serves as a tool for proving the $\sigma$-quasi-additivity of the Sobolev capacity in Theorem~\ref{thm:CX-properties}.
%
%
\begin{lemma}
\label{lem:sup_uniform_bdd}
Let $u_i$, $i=1,2,\ldots$, be uniformly bounded functions with upper gradients $g_i$. Then, $g=\sup_{i\ge1} g_i$ is an upper gradient of $u = \sup_{i\ge1} u_i$.
\end{lemma}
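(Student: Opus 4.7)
The plan is to verify the pointwise inequality in Definition~\ref{df:ug} directly, handling the supremum by an $\eps$-approximation rather than trying to use any sort of measurable selection. The uniform boundedness is needed precisely to guarantee that $u=\sup_{i\ge1}u_i$ takes only finite values, so the differences $u(\gamma(0))-u(\gamma(l_\gamma))$ make sense as ordinary real numbers. Note also that $g=\sup_{i\ge 1}g_i$ is Borel measurable as a countable pointwise supremum of Borel functions, so it is an admissible candidate for an upper gradient.

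Fix a curve $\gamma\colon[0,l_\gamma]\to \Pcal$. If $\int_\gamma g\,ds=\infty$, the required inequality is trivial, so assume this integral is finite; then each $\int_\gamma g_i\,ds\le \int_\gamma g\,ds$ is finite as well. Given $\eps>0$, I would use the definition of supremum to pick an index $i$ with $u_i(\gamma(0))\ge u(\gamma(0))-\eps$ (possible because $u(\gamma(0))$ is finite). Applying the upper gradient inequality for the pair $(u_i,g_i)$ and then estimating $u_i(\gamma(l_\gamma))\le u(\gamma(l_\gamma))$ and $\int_\gamma g_i\,ds\le \int_\gamma g\,ds$ yields
\[
 u(\gamma(0)) - u(\gamma(l_\gamma)) \le u_i(\gamma(0)) + \eps - u_i(\gamma(l_\gamma)) \le \int_\gamma g_i\,ds + \eps \le \int_\gamma g\,ds + \eps.
\]
Swapping the roles of the two endpoints (choosing an index $j$ with $u_j(\gamma(l_\gamma))\ge u(\gamma(l_\gamma))-\eps$) gives the analogous bound for $u(\gamma(l_\gamma)) - u(\gamma(0))$, and letting $\eps\to 0$ furnishes the inequality in~\eqref{eq:ug_def}.

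The only subtle point is that the natural first instinct, namely to pick a single index where $u_i$ is close to $u$ at \emph{both} endpoints simultaneously, is not available; the beauty of the argument is that one only needs approximation at one endpoint at a time, since $u_i(\gamma(l_\gamma))\le u(\gamma(l_\gamma))$ holds automatically. The uniform boundedness hypothesis is used in an essential way to rule out indeterminate differences $\infty-\infty$; without it the conclusion can fail, which is why the lemma is stated under this assumption.
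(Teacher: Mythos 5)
Your proof is correct and follows essentially the same route as the paper: the $\eps$-approximation at one endpoint combined with $u_i(\gamma(l_\gamma))\le u(\gamma(l_\gamma))$ is exactly an unpacking of the paper's one-line observation that $u(x)-u(y)\le\sup_{i\ge1}(u_i(x)-u_i(y))$, after which both arguments conclude via $\int_\gamma g_i\,ds\le\int_\gamma g\,ds$. Your remarks on the Borel measurability of $g$ and on the role of uniform boundedness in keeping $u$ finite are accurate and consistent with the paper's own counterexample $u_i\equiv i$, $g_i\equiv 0$.
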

Note that we cannot remove the assumption on uniform boundedness of the functions $u_i$ as it would render the lemma false. Indeed, consider $u_i \equiv i$ with $g_i \equiv 0$ for all $i\ge 1$. Then, $g\equiv 0$ is not an upper gradient of $u \equiv \infty$.
\begin{proof}
Observe that $u(x) - u(y) = \sup_{i\ge1} (u_i(x) - \sup_{j\ge1} u_j(y)) \le \sup_{i\ge1} (u_i(x) - u_i(y))$ whenever $x,y\in\Pcal$. For every curve $\gamma: [0, l_\gamma] \to \Pcal$, we have
\[
  |u(\gamma(0)) - u(\gamma(l_\gamma))| \le \sup_{i\ge 1} |u_i(\gamma(l_\gamma)) - u_i(\gamma(0))| \le \sup_{i\ge1} \int_\gamma g_i\,ds \le \int_\gamma g\,ds.
\qedhere
\]
\end{proof}
The capacity satisfies the following fundamental properties. Particularly, if $X$ is normed, then $C_X$ is an outer measure on $\Pcal$.
%
%
\begin{theorem}
\label{thm:CX-properties}
Let $E, E_1, E_2, \ldots$ be arbitrary subsets of $\Pcal$. Then
\begin{enumerate}
	\item \label{itm:CX-emptyset}
	      $C_X(\emptyset) = 0$;
	\item \label{itm:CX-vs-measure}
	      $\|\chi_E\|_X \le C_X(E)$; in particular, if $C_X(E) = 0$, then $\mu(E)=0$;
	\item \label{itm:CX-monotonicity}
	      if $E_1\subset E_2$, then $C_X(E_1)\le C_X(E_2)$;
	\item \label{itm:CX-subadditivity}
	      $C_X \bigl( \bigcup_{j=1}^\infty E_j \bigr) \le \sum_{j=1}^\infty c^j C_X(E_j)$, where $c\ge 1$ is the modulus of concavity of $X$.
\end{enumerate}
\end{theorem}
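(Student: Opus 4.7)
The plan is to dispatch parts (a)--(c) directly from the definitions (using Proposition~\ref{pro:capacity_def} for (b)) and to concentrate effort on part (d), where the interaction between the modulus of concavity, the Riesz--Fischer property, and Lemma~\ref{lem:sup_uniform_bdd} is the real content.

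For (a), the zero function satisfies ``$u\ge 1$ on $\emptyset$'' vacuously and admits $g\equiv 0$ as an upper gradient, so $C_X(\emptyset)\le\|0\|_\NX=0$. For (b), by Proposition~\ref{pro:capacity_def} it suffices to take the infimum over competitors $v$ with $\chi_E\le v\le 1$; for any such $v$ the lattice property~\ref{df:BFL.latticeprop} gives $\|\chi_E\|_X\le\|v\|_X\le\|v\|_\NX$, and then an infimum yields the bound. If $C_X(E)=0$, then $\|\chi_E\|_X=0$, hence $\chi_E=0$ a.e.\ by the first bullet of~\ref{df:qBFL.quasinorm}, i.e.\ $\mu(E)=0$. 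For (c), a function that is $\ge 1$ on $E_2\supset E_1$ is automatically $\ge 1$ on $E_1$, so the admissible class in the definition of $C_X(E_1)$ contains that of $C_X(E_2)$, and the infimum can only decrease.

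The heart of the argument is (d). We may assume $C_X(E_j)<\infty$ for every $j$. Fix $\eps>0$ and, using Proposition~\ref{pro:capacity_def}, pick for each $j$ a competitor $v_j$ with $\chi_{E_j}\le v_j\le 1$ together with an upper gradient $g_j\in X$ satisfying
\[
  \|v_j\|_X+\|g_j\|_X<C_X(E_j)+\eps\,2^{-j}c^{-j}.
\]
Define $v=\sup_{j\ge 1} v_j$ and $g=\sup_{j\ge 1} g_j$ (note that $g$ is Borel as a pointwise supremum of countably many Borel functions). Since $0\le v_j\le 1$ uniformly, Lemma~\ref{lem:sup_uniform_bdd} applies and shows that $g$ is an upper gradient of $v$; moreover $\chi_{\bigcup_j E_j}\le v\le 1$, so $v$ is admissible for $C_X\bigl(\bigcup_j E_j\bigr)$. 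The pointwise estimates $v\le\sum_j v_j$ and $g\le\sum_j g_j$, combined with the lattice property and the Riesz--Fischer property~\ref{df:qBFL.RF}, give
\[
  \|v\|_X\le\sum_{j=1}^\infty c^j\|v_j\|_X,\qquad \|g\|_X\le\sum_{j=1}^\infty c^j\|g_j\|_X.
\]
Adding these two and inserting the choice of $v_j$ and $g_j$ yields
\[
  C_X\Bigl(\bigcup_{j=1}^\infty E_j\Bigr)\le\|v\|_X+\|g\|_X\le\sum_{j=1}^\infty c^j\bigl(C_X(E_j)+\eps\,2^{-j}c^{-j}\bigr)=\sum_{j=1}^\infty c^j C_X(E_j)+\eps,
\]
and letting $\eps\to 0$ finishes the proof.

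The main obstacle is the quantitative balancing in (d): because the Riesz--Fischer inequality costs a factor $c^j$ on the $j$-th summand, the $\eps$-approximation for each $E_j$ must be tightened by the weight $2^{-j}c^{-j}$ so that the total excess remains $\eps$ rather than blowing up. The other subtlety is being sure that $\sup$'s, not $\sum$'s, are used in constructing $v$ and $g$, so that $v\le 1$ (keeping it admissible as a competitor) and Lemma~\ref{lem:sup_uniform_bdd} is available; the $\sum$ appears only in the norm estimates through Riesz--Fischer.
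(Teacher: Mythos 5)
Your proof is correct and follows essentially the same route as the paper: parts (a)--(c) directly from the definitions, and for (d) the choice of competitors $v_j$ with $\chi_{E_j}\le v_j\le 1$ and error weights $(2c)^{-j}\eps$, the pointwise suprema $v=\sup_j v_j$ and $g=\sup_j g_j$ handled via Lemma~\ref{lem:sup_uniform_bdd}, and the norm bound through $\sup\le\sum$ plus the Riesz--Fischer property. No gaps.
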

\begin{proof}
The proofs of properties~\ref{itm:CX-emptyset},~\ref{itm:CX-vs-measure}, and~\ref{itm:CX-monotonicity} are trivial. Let us focus on~\ref{itm:CX-subadditivity}. If $C_X(E_j) = \infty$ for some $j\in\Nbb$, then~\ref{itm:CX-subadditivity} holds trivially. Suppose now that $C_X(E_j) < \infty$ for every $j\in\Nbb$. For each $E_j$, $j\in\Nbb$, we can hence find $u_j\in \NX$ with an upper gradient $g_j\in X$ such that $\chi_{E_j} \le u_j \le 1$, and $\|u_j\|_X + \|g_j\|_X < C_X(E_j) + (2c)^{-j} \eps$. Let $u = \sup_{j\ge1} u_j$ and $g = \sup_{j\ge1} g_j$. Then, $\chi_{\bigcup_{j=1}^\infty E_j} \le u \le 1$, while $g$ is an upper gradient of $u$ by Lemma~\ref{lem:sup_uniform_bdd}. Hence,
\begin{align*}
  C_X\biggl( \bigcup_{j=1}^\infty E_j \biggr) & \le \|u\|_{\NX} \le \biggl\| \sup_{j\ge 1} u_j\biggr\|_X + \biggl\| \sup_{j\ge 1} g_j\biggr\|_X \le \biggl\| \sum_{j=1}^\infty u_j\biggr\|_X + \biggl\| \sum_{j=1}^\infty g_j\biggr\|_X \\
  & \le \sum_{j=1}^\infty c^j (\|u_j\|_X + \|g_j\|_X) < \sum_{j=1}^\infty \biggl( c^j C_X(E_j) + \frac{ c^j\eps}{(2c)^j}\biggr) = \eps + \sum_{j=1}^\infty c^j C_X(E_j)\,.
\end{align*}
Letting $\eps\to 0$ completes the proof of~\ref{itm:CX-subadditivity}.
\end{proof}
\begin{remark}
In view of Remark~\ref{rem:N1rX}, we may define another Sobolev $X$-capacity of a set $E\subset \Pcal$ as $C_{X,r}(E) = \inf \{\|u\|^r_{N^1_rX} : u \ge 1\mbox{ on }E\}$, where $r\in(0, \infty)$ is chosen so that the $r$th power of $\|\cdot \|_X$ (and hence of $\|\cdot\|_{N^1_rX}$) is subadditive. Then, $C_{X,r}$ and the $r$th power of $C_X$ are equivalent, i.e., there is $c'\ge 1$ such that $C_{X,r}(E) / c' \le C_X(E)^r \le c' C_{X,r}(E)$ for every $E\subset \Pcal$. Furthermore, it can be proven similarly as Theorem~\ref{thm:CX-properties} that $C_{X,r}$ is an outer measure on $\Pcal$ even if $X$ is merely quasi-normed with modulus of concavity strictly greater than $1$.
\end{remark}
All functions in $X$ are finite a.e. The Newtonian functions, however, satisfy a stronger condition, namely, they are finite q.e., which is shown in the following proposition.
%
%
\begin{proposition}
\label{pro:N1X-finite-qe}
If $u\in \NX$, then $C_X(\{ x\in \Pcal: |u(x)| = \infty\}) = 0$.
\end{proposition}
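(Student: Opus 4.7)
The plan is to exhibit, for every $n\in\Nbb$, an admissible test function for the capacity of $E \coloneq \{x\in\Pcal : |u(x)|=\infty\}$ whose Newtonian quasi-norm is controlled by $1/n$, and then let $n\to\infty$.

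Since $u\in \NX$, we have $\inf_g \|g\|_X <\infty$, so we may fix an upper gradient $g\in X$ of $u$ with $\|g\|_X<\infty$. By Lemma~\ref{lem:ug.linearity}, $g$ is also an upper gradient of $|u|$ (the argument is the one used in the proof of Theorem~\ref{thm:N1X-lattice}; note that the convention $|(\pm\infty)-(\pm\infty)|=\infty$ only helps us here). For each $n\in\Nbb$, set
\[
  v_n \coloneq \min\{|u|/n,\,1\}.
\]
Clearly $0\le v_n\le 1$, and on $E$ we have $|u|/n=\infty$, so $v_n=1$; consequently $\chi_E\le v_n\le 1$. Moreover, reasoning as in the proof of Proposition~\ref{pro:capacity_def}, truncation at $0$ and at~$1$ preserves upper gradients, so $g/n$ (which is an upper gradient of $|u|/n$ by Lemma~\ref{lem:ug.linearity}) is an upper gradient of $v_n$.

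Now I invoke Proposition~\ref{pro:capacity_def} together with the lattice property~\ref{df:BFL.latticeprop}. Since $v_n\le |u|/n$ pointwise, we get $\|v_n\|_X\le \|u\|_X/n$, and therefore
\[
  C_X(E) \le \|v_n\|_{\NX} \le \|v_n\|_X + \|g/n\|_X \le \frac{\|u\|_X + \|g\|_X}{n}.
\]
Letting $n\to\infty$ forces $C_X(E)=0$.

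There is no real obstacle; the mild subtlety is just checking that the truncation $|u|/n \mapsto \min\{|u|/n,1\}$ preserves upper gradients even where $|u|=\infty$. This is harmless because after truncation the values lie in $[0,1]$, so the left-hand side of the upper gradient inequality is always a finite number bounded by the corresponding expression for $|u|/n$, which in turn is bounded by $\int_\gamma g/n\,ds$.
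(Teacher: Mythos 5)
Your proof is correct and is essentially the paper's argument: both rest on the scaling $C_X(E)\le \bigl\|\,|u|/n\,\bigr\|_{\NX}\to 0$ as $n\to\infty$. The truncation to $v_n=\min\{|u|/n,1\}$ is harmless but not needed — the paper simply uses $|u|/n$ itself, which is already admissible in Definition~\ref{df:capacity} since $|u|/n\ge 1$ on $E$, and concludes via homogeneity that $C_X(E)\le\bigl\|\,|u|\,\bigr\|_{\NX}/n$.
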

\begin{proof}
Let $E = \{ x\in \Pcal: |u(x)| = \infty\}$. Then, $|u|/k \ge 1$ on $E$ for all $k>0$. Thus,
\[
  C_X(E) \le \biggl\| \frac{|u|}{k} \biggr\|_{\NX} = \frac{\bigl\|\, |u|\, \bigr\|_{\NX}}{k} \to 0 \quad \mbox{as }k\to \infty.
\qedhere
\]
\end{proof}
%
%
%
%
\section{Modulus of a curve family}
\label{sec:mod}
%
%
In this section we define the $X$-modulus, which allows us to measure curve families in terms of the quasi-norm of the space $X$. The $L^p$-modulus of a system of measures on $\Rbb^n$ was originally defined and studied by Fuglede~\cite{Fug}. Heinonen and Koskela then defined the $L^p$-modulus of a family of curves in a metric measure space in~\cite{HeiKos}. The definition below generalizes their approach; however, where they have the $p$th power of $\|\cdot\|_{L^p}$, we use just $\|\cdot\|_{L^p}$. Despite this little modification, the properties of the modulus remain qualitatively the same and, most importantly, it does not affect which of the curve families have modulus equal to zero.
%
%
\begin{definition}
\label{df:curve_families}
For an arbitrary set $E\subset \Pcal$, we define
\[
  \Gamma_E = \{\gamma \in \Gamma(\Pcal): \gamma^{-1}(E) \neq \emptyset\}
  \quad\mbox{and}\quad
  \Gamma_E^+ = \{\gamma \in \Gamma(\Pcal): \lambda^1(\gamma^{-1}(E))> 0 \},
\]
where $\lambda^1$ denotes the (outer) 1-dimensional Lebesgue measure. 
\end{definition}
\begin{remark}
If the set $\gamma^{-1}(E) \subset \Rbb$ is not $\lambda^1$-measurable, then $\lambda^1(\gamma^{-1}(E))> 0$. Observe that $\Gamma_\Pcal = \Gamma(\Pcal)$.
\end{remark}
%
%
\begin{definition}
\label{df:mod}
Let $\Gamma$ be a family of curves in $\Pcal$. The \emph{$X$-modulus} of $\Gamma$ is defined by
\[
  \Mod_X(\Gamma) := \inf \| \rho\|_X,
\]
where the infimum is taken over all non-negative Borel functions $\rho$ that satisfy $\int_\gamma \rho\,ds\ge 1$ for all $\gamma\in\Gamma$.

A claim is said to hold for \emph{$\Mod_X$-almost every} curve (abbreviated \emph{$\Mod_X$-a.e.} curve) if the family of exceptional curves has zero $X$-modulus.
\end{definition}
%
%
\begin{definition}
A curve $\gamma'$ is a \emph{subcurve} of a curve $\gamma: [0, l_\gamma] \to \Pcal$ if, after re\-pa\-ra\-me\-tri\-za\-tion and perhaps reversion, $\gamma'$ is equal to $\gamma|_{[a,b]}$ for some $0\le a < b \le l_\gamma$.
\end{definition}
The following lemma summarizes the basic properties of the $X$-modulus. Many arguments based on the concept of a modulus depend on the fact that a certain family of curves has modulus equal to zero. From this point of view, the claim~\ref{lem:mod_properties:union_0} of the lemma is worth emphasis as it shows that a countable union of families of curves with zero $X$-modulus has $X$-modulus equal to zero.
%
%
\begin{lemma}
\label{lem:mod_properties}
The modulus satisfies the following properties given that $\Gamma_k$, $k\in\Nbb$, are families of curves in $\Pcal$.
\begin{enumerate}
  \item \label{lem:mod_properties:monotone} If $\Gamma_1 \subset \Gamma_2$, then $\Mod_X(\Gamma_1) \le \Mod_X(\Gamma_2)$.
  \item \label{lem:mod_properties:subadditive} If $X$ is a quasi-normed space with the modulus of concavity $c\ge 1$, then $\Mod_X$ is $\sigma$-quasi-additive, i.e.,
  \[
    \Mod_X \biggl( \bigcup_{k=1}^\infty \Gamma_k \biggr) \le \sum_{k=1}^\infty c^k \Mod_X(\Gamma_k).
  \]
  In particular, if $X$ is a normed space, then $\Mod_X$ is $\sigma$-subadditive.
  \item \label{lem:mod_properties:union_0}
    If $\Mod_X(\Gamma_k) = 0$ for every $k\in\Nbb$, then $\Mod_X(\bigcup_{k=1}^\infty \Gamma_k ) = 0$.  
  \item \label{lem:mod_properties:subcurve_family}
    If for every curve $\gamma_1\in\Gamma_1$ there is a subcurve $\gamma_2\in\Gamma_2$ of $\gamma_1$, then $\Mod_X(\Gamma_1) \le \Mod_X(\Gamma_2).$
\end{enumerate}
\end{lemma}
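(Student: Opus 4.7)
The plan is to handle the four parts in order, with parts (a) and (d) essentially by inspection of admissible densities and parts (b) and (c) following from the Riesz--Fischer property of $X$.

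For \textup{(a)}, I would observe that every non-negative Borel function $\rho$ admissible for $\Gamma_2$ (i.e.\@ satisfying $\int_\gamma \rho\,ds \ge 1$ for all $\gamma \in \Gamma_2$) is a fortiori admissible for $\Gamma_1 \subset \Gamma_2$. Taking the infimum over a larger set of admissible functions can only decrease the $X$-quasi-norm, so $\Mod_X(\Gamma_1) \le \Mod_X(\Gamma_2)$.

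For \textup{(b)}, the key step is to aggregate admissible densities via the Riesz--Fischer property~\ref{df:qBFL.RF}. Given $\eps > 0$, I would pick for each $k$ a non-negative Borel $\rho_k$ with $\int_\gamma \rho_k\,ds \ge 1$ for all $\gamma \in \Gamma_k$ and $\|\rho_k\|_X < \Mod_X(\Gamma_k) + (2c)^{-k}\eps$. Set $\rho = \sum_{k=1}^\infty \rho_k$, which is a non-negative Borel function. For every $\gamma \in \bigcup_k \Gamma_k$ there exists some $k$ with $\gamma \in \Gamma_k$, and monotone convergence for line integrals gives
\[
  \int_\gamma \rho\,ds = \sum_{k=1}^\infty \int_\gamma \rho_k\,ds \ge \int_\gamma \rho_k\,ds \ge 1,
\]
so $\rho$ is admissible for $\bigcup_k \Gamma_k$. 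Applying~\ref{df:qBFL.RF} yields
\[
  \Mod_X\biggl(\bigcup_{k=1}^\infty \Gamma_k\biggr) \le \|\rho\|_X \le \sum_{k=1}^\infty c^k \|\rho_k\|_X < \sum_{k=1}^\infty c^k \Mod_X(\Gamma_k) + \eps,
\]
and letting $\eps \to 0$ gives the bound. In the normed case $c = 1$, so this reduces to genuine $\sigma$-subadditivity.

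Part \textup{(c)} is an immediate consequence of \textup{(b)}: substituting $\Mod_X(\Gamma_k) = 0$ into the inequality yields $\Mod_X(\bigcup_k \Gamma_k) \le 0$, hence equality with zero. For \textup{(d)}, let $\rho$ be any non-negative Borel function admissible for $\Gamma_2$. For any $\gamma_1 \in \Gamma_1$ there is a subcurve $\gamma_2 \in \Gamma_2$ of $\gamma_1$, and since $\rho \ge 0$,
\[
  \int_{\gamma_1} \rho\,ds \ge \int_{\gamma_2} \rho\,ds \ge 1,
\]
so $\rho$ is admissible for $\Gamma_1$ as well; taking the infimum over $\rho$ gives the claim. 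The only genuinely non-trivial step is the use of the Riesz--Fischer property in \textup{(b)}, which is precisely the axiom engineered to handle countable sums in the quasi-normed setting; everything else is straightforward bookkeeping with admissible densities.
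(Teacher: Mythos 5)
Your proposal is correct and follows essentially the same route as the paper: part (a) by comparing admissible classes, part (b) by aggregating near-optimal densities $\rho_k$ and invoking the Riesz--Fischer property, part (c) as a corollary of (b), and part (d) by monotonicity of the path integral over subcurves. The only cosmetic difference is that in (b) the paper takes $\rho=\sup_{k\ge1}\rho_k$ and then bounds $\|\sup_k\rho_k\|_X\le\|\sum_k\rho_k\|_X$ via the lattice property before applying Riesz--Fischer, whereas you work with $\rho=\sum_k\rho_k$ directly; both are valid.
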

We shall see in the proof that~\ref{lem:mod_properties:subcurve_family} says, roughly speaking, that the longer the curves in $\Gamma$ are, the smaller $\Mod_X(\Gamma)$ is.
\begin{proof}
\ref{lem:mod_properties:monotone} The infimum in the definition of $\Mod_X(\Gamma_1)$ is taken over a larger set of functions than in the definition of $\Mod_X(\Gamma_2)$.

\ref{lem:mod_properties:subadditive} Let $\eps > 0$. For each $k\in\Nbb$, we can find a non-negative Borel function $\rho_k$ such that $\Mod_X(\Gamma_k) \le \|\rho_k\|_X \le \Mod_X(\Gamma_k) + (2c)^{-k} \eps$ while $\int_\gamma \rho_k \,ds\ge 1$ whenever $\gamma\in\Gamma_k$. Let $\rho = \sup_{k\ge1} \rho_k$. Then, $\int_\gamma \rho \,ds\ge 1$ for every curve $\gamma \in \bigcup_{k=1}^\infty \Gamma_k$, and
\begin{align*}
  \Mod_X\biggl(\bigcup_{k=1}^\infty \Gamma_k\biggr) & \le \|\rho \|_X = \biggl\|\sup_{k\ge1} \rho_k\biggr\|_X \le \biggl\| \sum_{k=1}^\infty \rho_k \biggr\|_X \le \sum_{k=1}^\infty c^k \| \rho_k\|_X \\
  & \le \sum_{k=1}^\infty \bigl(c^k  \Mod_X(\Gamma_k) + 2^{-k}\eps\bigr) = \eps + \sum_{k=1}^\infty c^k  \Mod_X(\Gamma_k)\,.
\end{align*}
Letting now $\eps\to0$ finishes the proof of~\ref{lem:mod_properties:subadditive}.

\ref{lem:mod_properties:union_0} The claim follows immediately from~\ref{lem:mod_properties:subadditive}.

\ref{lem:mod_properties:subcurve_family} Let $\eps>0$. Then, we can find a function $\rho \in X$ such that $\int_\gamma \rho\,ds\ge 1$ for every curve $\gamma\in\Gamma_2$ and $\|\rho\|_X \le \Mod_X(\Gamma_2) + \eps$. For every curve $\gamma_1 \in \Gamma_1$ we can find a subcurve $\gamma_2\in\Gamma_2$, and thus, $\int_{\gamma_1} \rho \, ds\ge  \int_{\gamma_2} \rho \, ds \ge 1$. Consequently, $\Mod_X(\Gamma_1) \le \|\rho\|_X \le \Mod_X(\Gamma_2)+ \eps$. Letting $\eps\to 0$ finishes the proof.
\end{proof}
\begin{remark}
If the $r$th power of $\| \cdot \|_X$ is subadditive for some $r\in (0, \infty)$, then the $r$th power of $\Mod_X$ is $\sigma$-subadditive, which can be shown along the same lines as Lemma~\ref{lem:mod_properties}\,\ref{lem:mod_properties:subadditive}. Consequently, $\Mod_X(\cdot)^r$ is an outer measure on $\Gamma(\Pcal)$.
\end{remark}
%
%
\begin{proposition}
\label{pro:meas_f_has_borel_repr}
If $f: \Pcal \to \overline{\Rbb}$ is measurable, then there exist Borel functions $f_1, f_2: \Pcal \to \overline{\Rbb}$ such that $f_1 \le f \le f_2$ and $f_1 = f_2$ a.e.
\end{proposition}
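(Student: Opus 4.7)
The plan is to exploit Borel regularity of $\mu$ applied to the rational superlevel sets $A_q = \{x \in \Pcal : f(x) > q\}$, $q \in \Qbb$. Each $A_q$ is $\mu$-measurable, so the stated (outer) Borel regularity furnishes a Borel set $D_q \supset A_q$ with $\mu(D_q \setminus A_q) = 0$. To obtain an \emph{inner} Borel envelope $B_q \subset A_q$ with $\mu(A_q \setminus B_q) = 0$, I would invoke $\sigma$-finiteness: fix a Borel exhaustion $\Pcal = \bigcup_n X_n$ with $\mu(X_n) < \infty$ (e.g., $X_n = B(x_0, n)$ for some $x_0 \in \Pcal$), apply outer regularity to the measurable sets $X_n \setminus A_q$ to get Borel $\widetilde{D}_{n,q} \supset X_n \setminus A_q$ with $\mu(\widetilde{D}_{n,q} \setminus (X_n \setminus A_q)) = 0$, and set $B_q = \bigcup_n (X_n \setminus \widetilde{D}_{n,q})$. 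A short bookkeeping check gives $B_q \subset A_q$ Borel with $\mu(A_q \setminus B_q) = 0$.

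Having produced $B_q \subset A_q \subset D_q$ with $\mu(D_q \setminus B_q) = 0$ for every rational $q$, define
\[
  f_1(x) = \sup\{q \in \Qbb : x \in B_q\}, \qquad f_2(x) = \sup\{q \in \Qbb : x \in D_q\},
\]
with the convention $\sup \emptyset = -\infty$. Borel-measurability is immediate: for every $a \in \Rbb$,
\[
  \{f_2 > a\} = \bigcup_{q \in \Qbb,\, q > a} D_q
\]
is Borel, and analogously for $f_1$. The nesting $B_q \subset A_q \subset D_q$ yields the pointwise bounds $f_1 \le f \le f_2$: if $x \in B_q$ then $f(x) > q$, while if $f(x) > q$ then $x \in A_q \subset D_q$.

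To finish, take $N = \bigcup_{q \in \Qbb} (D_q \setminus B_q)$, a countable union of $\mu$-null sets and therefore itself $\mu$-null. For $x \notin N$, the families $\{q \in \Qbb : x \in B_q\}$ and $\{q \in \Qbb : x \in D_q\}$ agree, and both equal $\{q \in \Qbb : f(x) > q\}$, whence
\[
  f_1(x) = f_2(x) = \sup\{q \in \Qbb : f(x) > q\} = f(x),
\]
regardless of whether $f(x)$ is a finite real, $+\infty$, or $-\infty$ (in the last case both suprema are over the empty set).

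The only non-routine step is the passage from outer to inner Borel regularity; this is where the $\sigma$-finiteness hypothesis genuinely enters. Once both approximations are in hand, everything else is formal manipulation with rational superlevel sets.
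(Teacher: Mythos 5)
Your argument is correct, but note that the paper does not prove this proposition at all: it simply cites Bj\"{o}rn and Bj\"{o}rn [Proposition 1.2], so you have supplied a self-contained proof where the paper outsources one. The route you take --- rational superlevel sets $A_q=\{f>q\}$, Borel envelopes $B_q\subset A_q\subset D_q$ with $\mu(D_q\setminus B_q)=0$, and then $f_1,f_2$ defined as suprema of the indexing rationals --- is the standard argument and matches what the cited reference does in spirit. One small asymmetry to repair: the paper's Borel regularity only guarantees a Borel $D\supset A$ with $\mu(D)=\mu(A)$, which yields $\mu(D\setminus A)=0$ only when $\mu(A)<\infty$; so for the \emph{outer} envelope $D_q$ you cannot simply assert $\mu(D_q\setminus A_q)=0$ when $\mu(A_q)=\infty$, and you need the same $\sigma$-finite chopping $\Pcal=\bigcup_n X_n$ (with $X_n=B(x_0,n)$ open, hence Borel, and of finite measure) that you already deploy for the inner envelope: take Borel $D_{q,n}\supset A_q\cap X_n$ of equal finite measure and set $D_q=\bigcup_n(D_{q,n}\cap X_n)$. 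With that symmetric patch, every step --- the Borel measurability of $f_1,f_2$ via $\{f_2>a\}=\bigcup_{q>a}D_q$, the sandwich $f_1\le f\le f_2$ from $B_q\subset A_q\subset D_q$, and the a.e.\@ equality off the null set $\bigcup_q(D_q\setminus B_q)$, including the conventions at $\pm\infty$ --- checks out.
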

A proof can be found in Bj\"{o}rn and Bj\"{o}rn~\cite[Proposition 1.2]{BjoBjo}.

As already mentioned, it is whether the $X$-modulus is zero or not that is important to all our arguments based on the notion of $X$-modulus. Therefore, we establish a couple of characterizations equivalent to the condition $\Mod_X(\Gamma) = 0$.
%
%
\begin{proposition}
\label{pro:mod0_equiv}
Let $x\in\Pcal$ and let $\Gamma$ be a family of curves in $\Pcal$. The following are equivalent:
\begin{enumerate}
  \item \label{pro:mod0_equiv.a} $\Mod_X(\Gamma) = 0;$
  \item \label{pro:mod0_equiv.b} there is a non-negative Borel function $\rho\in X$ such that $\int_\gamma \rho\,ds = \infty$ for all curves $\gamma\in\Gamma;$
  \item \label{pro:mod0_equiv.c} there is a non-negative measurable function $\rho$ such that $\rho \chi_{B(x, r)}\in X$  for all radii $r>0$, and such that $\int_\gamma \rho\,ds = \infty$ for all curves $\gamma\in\Gamma$.
\end{enumerate}
\end{proposition}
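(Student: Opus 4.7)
The plan is to establish the four implications \textup{(a)}$\Leftrightarrow$\textup{(b)} and \textup{(b)}$\Leftrightarrow$\textup{(c)}; the substantive new step is \textup{(c)}$\Rightarrow$\textup{(b)}, while the others follow standard patterns. The implication \textup{(b)}$\Rightarrow$\textup{(a)} is a simple scaling argument: for any non-negative Borel $\rho \in X$ with $\int_\gamma \rho\,ds = \infty$ for all $\gamma \in \Gamma$, the function $\rho/k$ is admissible for $\Gamma$ for every $k \in \Nbb$, yielding $\Mod_X(\Gamma) \le \|\rho\|_X/k \to 0$. The implication \textup{(b)}$\Rightarrow$\textup{(c)} is immediate from the lattice axiom~\ref{df:BFL.latticeprop}, since $\rho\chi_{B(x,r)} \le \rho$ pointwise.

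For \textup{(a)}$\Rightarrow$\textup{(b)} I would run the classical Fuglede-type diagonal construction, adapted to accommodate the modulus of concavity via~\ref{df:qBFL.RF}. Letting $c \ge 1$ denote the modulus of concavity of $X$, for each $k \in \Nbb$ pick a non-negative Borel $\rho_k$ admissible for $\Gamma$ with $\|\rho_k\|_X \le (2c)^{-k}$. Set $\rho := \sum_{k=1}^\infty \rho_k$; this is a countable sum of Borel functions, hence Borel, and the Riesz--Fischer property gives $\|\rho\|_X \le \sum_k c^k (2c)^{-k} \le 1$. Since $\int_\gamma \rho_k\,ds \ge 1$ for every $k$ and every $\gamma \in \Gamma$, monotone convergence along $\gamma$ yields $\int_\gamma \rho\,ds = \sum_k \int_\gamma \rho_k\,ds = \infty$.

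For \textup{(c)}$\Rightarrow$\textup{(b)} I would first replace the given $\rho$ by a Borel majorant using Proposition~\ref{pro:meas_f_has_borel_repr}: a.e.\@ equality preserves membership of each $\rho\chi_{B(x,r)}$ in $X$ via~\ref{df:BFL.latticeprop}, and pointwise domination preserves the divergence of every $\int_\gamma \rho\,ds$. Next, decompose $\Pcal$ into the annuli $A_0 := B(x,1)$ and $A_k := B(x,k+1) \setminus B(x,k)$ for $k \ge 1$, so that $c_k := \|\rho\chi_{A_k}\|_X \le \|\rho\chi_{B(x,k+1)}\|_X$ is finite for every $k$. Choose weights $a_k > 0$ with $a_k c_k c^k \le 2^{-k}$ and set $\tilde\rho := \sum_{k=0}^\infty a_k \rho\chi_{A_k}$; another application of Riesz--Fischer then gives $\tilde\rho \in X$. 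The key point is that every $\gamma \in \Gamma$ is compact in $\Pcal$, hence contained in some ball $B(x,N)$, so the finite identity $\sum_{k=0}^{N-1} \int_\gamma \rho\chi_{A_k}\,ds = \int_\gamma \rho\,ds = \infty$ forces at least one index $k_0 < N$ with $\int_\gamma \rho\chi_{A_{k_0}}\,ds = \infty$, whence $\int_\gamma \tilde\rho\,ds \ge a_{k_0} \int_\gamma \rho\chi_{A_{k_0}}\,ds = \infty$.

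The main obstacle is exactly this \textup{(c)}$\Rightarrow$\textup{(b)} reassembly: the hypothesis delivers only local finiteness of the quasi-norm in balls centred at $x$, whereas \textup{(b)} demands one globally integrable Borel function whose curve integrals simultaneously diverge for every $\gamma \in \Gamma$. What makes the glueing work is the combination of the Riesz--Fischer axiom (permitting weighted series in $X$), the strict positivity of every weight $a_k$, and the boundedness of rectifiable curves (which forces any fixed $\gamma$ to meet only finitely many annuli and so inherits the divergence from at least one $\rho\chi_{A_{k_0}}$).
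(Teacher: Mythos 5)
Your proposal is correct and follows essentially the same route as the paper: the series $\sum_n \rho_n$ with $\|\rho_n\|_X \le (2c)^{-n}$ for \textup{(a)}$\Rightarrow$\textup{(b)}, the lattice property for \textup{(b)}$\Rightarrow$\textup{(c)}, and for the remaining implication the same three ingredients (Borel majorant from Proposition~\ref{pro:meas_f_has_borel_repr}, compactness of rectifiable curves placing each $\gamma$ inside some $B(x,N)$, and a Riesz--Fischer-controlled weighted series normalized by the local quasi-norms). The only differences are cosmetic: you close the cycle as \textup{(c)}$\Rightarrow$\textup{(b)}$\Rightarrow$\textup{(a)} using disjoint annuli and a single function $\tilde\rho\in X$, whereas the paper proves \textup{(c)}$\Rightarrow$\textup{(a)} directly with nested balls $B(x,k)$ and a sequence $\rho_n$ of admissible functions with $\|\rho_n\|_X\le 1/n$.
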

\begin{proof}
\ref{pro:mod0_equiv.a} $\Rightarrow$~\ref{pro:mod0_equiv.b} For every $n\in\Nbb$ there is a non-negative Borel function $\rho_n\in X$ such that
\[
  \int_\gamma \rho_n\,ds \ge 1 \mbox{ for all }\gamma\in\Gamma, \quad \mbox{and}\quad
  \|\rho_n\|_X \le (2c)^{-n},
\]
where $c\ge 1$ is the modulus of concavity appearing in the triangle inequality in~\ref{df:qBFL.quasinorm} and consequently in the Riesz--Fischer property~\ref{df:qBFL.RF} of $X$. Let $\rho = \sum_{n=1}^\infty \rho_n \in X$. Then, $\int_\gamma \rho \,ds = \infty$ for all $\gamma\in\Gamma$.

\ref{pro:mod0_equiv.b} $\Rightarrow$~\ref{pro:mod0_equiv.c} This implication follows from the lattice property~\ref{df:BFL.latticeprop} of the function space~$X$.

\ref{pro:mod0_equiv.c} $\Rightarrow$~\ref{pro:mod0_equiv.a} Due to Proposition~\ref{pro:meas_f_has_borel_repr}, there is a non-negative Borel function $\tilde{\rho}\ge\rho$ such that $\tilde{\rho} = \rho$ a.e. Let
\[
  \rho_n = \frac{1}{n} \sum_{k=1}^\infty \frac{\tilde{\rho} \chi_{B(x, k)}}{(2c)^k \|\tilde{\rho} \chi_{B(x, k)}\|_X+1}\,,
\]
where $c\ge 1$ retains its meaning as previously, while $n\in\Nbb$. Then, $\|\rho_n\|_X \le 1/n$. If now $\gamma\in\Gamma$, then it is contained within a ball $B(x,k)$ for some $k\in\Nbb$ as the range of the curve $\gamma$ is compact. Therefore, $\int_\gamma \rho_n\,ds =\infty \ge 1$. Hence, $\Mod_X (\Gamma) \le \|\rho_n\|_X \to 0$, as $n\to\infty$.
\end{proof}
%
%
\begin{lemma}
\label{lem:mod_G+_0_meas}
Assume that $\mu(E) = 0$, then $\Mod_X(\Gamma_E^+) = 0$.
\end{lemma}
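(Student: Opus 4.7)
The plan is to exhibit a single admissible function in Definition~\ref{df:mod} whose $X$-quasi-norm is zero.

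First, since $\mu$ is Borel regular, I would choose a Borel set $D\supset E$ with $\mu(D)=\mu(E)=0$. Then $\gamma^{-1}(E)\subset \gamma^{-1}(D)$ for every curve $\gamma$, so $\Gamma_E^+\subset \Gamma_D^+$ and by the monotonicity in Lemma~\ref{lem:mod_properties}\,\ref{lem:mod_properties:monotone} it is enough to prove $\Mod_X(\Gamma_D^+)=0$.

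Next, I would take the Borel function $\rho\colon \Pcal\to[0,\infty]$ defined by $\rho=\infty\cdot\chi_D$, i.e.\ $\rho(x)=\infty$ for $x\in D$ and $\rho(x)=0$ elsewhere. Since $\mu(D)=0$, we have $\rho=0$ $\mu$-almost everywhere, and the first bullet of~\ref{df:qBFL.quasinorm} yields $\|\rho\|_X=0$. On the other hand, for every $\gamma\in\Gamma_D^+$ the set $\gamma^{-1}(D)\subset[0,l_\gamma]$ has positive one-dimensional Lebesgue measure by definition of $\Gamma_D^+$, so using the arc-length parametrization,
\[
  \int_\gamma \rho\,ds = \int_0^{l_\gamma} \rho(\gamma(t))\,dt = \infty\cdot\lambda^1(\gamma^{-1}(D)) = \infty \ge 1.
\]
Hence $\rho$ is admissible in Definition~\ref{df:mod}, which forces $\Mod_X(\Gamma_D^+)\le \|\rho\|_X=0$ and finishes the argument.

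I do not anticipate any genuine obstacle; this is the standard Fuglede-type trick. The only points that require a brief check are the passage from $E$ to a Borel superset (immediate from Borel regularity of $\mu$) and the admissibility of a pointwise-infinite $\rho$: Definition~\ref{df:mod} only demands a non-negative Borel function and treats $+\infty$ as a legal value, while the ``zero iff a.e.\ zero'' clause of~\ref{df:qBFL.quasinorm} is exactly what makes the quasi-norm of such an $\rho$ vanish. Alternatively, one could bypass the pointwise-infinite function by invoking Proposition~\ref{pro:mod0_equiv}\,\ref{pro:mod0_equiv.b}$\Rightarrow$\ref{pro:mod0_equiv.a} directly, but the direct computation above is cleaner.
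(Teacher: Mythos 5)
Your proof is correct and is essentially identical to the paper's own argument: both pass to a Borel superset of measure zero, take $\rho=\infty$ on that set and $0$ elsewhere, and conclude from $\|\rho\|_X=0$ together with $\int_\gamma\rho\,ds=\infty$ on every curve in the enlarged family. No issues.
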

\begin{proof}
Let $F\supset E$ be a Borel set of zero measure, then $\Gamma_F^+ \supset \Gamma_E^+$. Let $\rho=\infty$ on $F$, outside of which let $\rho$ be zero. Every curve $\gamma\in\Gamma_F^+$ satisfies $\lambda^1(\gamma^{-1}(F)) > 0$ while the set $F\cap \gamma$ is Borel, hence $\int_\gamma \rho\,ds$ is well defined and attains the value $\infty$. Finally, $\Mod_X(\Gamma^+_E) \le \Mod_X(\Gamma^+_F) \le \| \rho \|_X = 0$ since $\rho = 0$ a.e.
\end{proof}
The following lemma shows that we may modify a non-negative measurable function on a set of measure zero while the value of the path integral of this function over a curve $\gamma$ remains the same for $\Mod_X$-a.e.\@ curve $\gamma$.
%
%
\begin{lemma}
\label{lem:mod_ae_equivalence}
Let $g_1$ and $g_2$ be non-negative measurable functions such that  $g_1=g_2$ a.e. Then
\[
  \int_\gamma g_1\,ds = \int_\gamma g_2\,ds \quad \mbox{for $\Mod_X$-a.e.\@ curve $\gamma$.}
\]
In particular, $\int_\gamma g_1\,ds$ is well defined and has a value in $[0, \infty]$ for $\Mod_X$-a.e.\@ curve $\gamma$.
\end{lemma}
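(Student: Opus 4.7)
The plan is to reduce the claim to the case of Borel-measurable functions via Proposition~\ref{pro:meas_f_has_borel_repr}, and then apply Lemma~\ref{lem:mod_G+_0_meas} to the exceptional null set.

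First, I would use Proposition~\ref{pro:meas_f_has_borel_repr} to pick non-negative Borel functions $\tilde{g}_1, \tilde{g}_2$ with $\tilde{g}_i \ge g_i$ and $\tilde{g}_i = g_i$ a.e. By Borel regularity of $\mu$, we can then find a Borel set $E \subset \Pcal$ with $\mu(E) = 0$ that contains $\{\tilde{g}_1 \neq g_1\} \cup \{\tilde{g}_2 \neq g_2\} \cup \{\tilde{g}_1 \neq \tilde{g}_2\}$ (the last set belongs to the union of the first two up to a null set, since $g_1 = g_2$ a.e.). By Lemma~\ref{lem:mod_G+_0_meas}, $\Mod_X(\Gamma_E^+) = 0$.

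Next, for any curve $\gamma \in \Gamma(\Pcal) \setminus \Gamma_E^+$, the set $\gamma^{-1}(E)$ is $\lambda^1$-measurable (since $E$ is Borel and $\gamma$ is continuous) and has $\lambda^1$-measure zero. Therefore $\tilde{g}_1(\gamma(t)) = \tilde{g}_2(\gamma(t))$ for $\lambda^1$-a.e.\ $t \in [0, l_\gamma]$, which gives
\[
  \int_\gamma \tilde{g}_1 \, ds = \int_0^{l_\gamma} \tilde{g}_1(\gamma(t))\, dt = \int_0^{l_\gamma} \tilde{g}_2(\gamma(t))\, dt = \int_\gamma \tilde{g}_2 \, ds.
\]
Moreover, since $g_i \circ \gamma$ agrees $\lambda^1$-a.e.\ with the Borel function $\tilde{g}_i \circ \gamma$ on $[0, l_\gamma]$, the path integral $\int_\gamma g_i \, ds$ is unambiguously defined (as the common value obtained from any Borel majorant that agrees with $g_i$ a.e.), and equals $\int_\gamma \tilde{g}_i \, ds$. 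Combining these, $\int_\gamma g_1 \, ds = \int_\gamma g_2 \, ds$ for every $\gamma \notin \Gamma_E^+$, which proves the main statement and the addendum about the value of $\int_\gamma g_1 \, ds$ being well defined in $[0,\infty]$.

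The only conceptual subtlety — and the main thing to get right — is the interpretation of $\int_\gamma g \, ds$ when $g$ is merely measurable rather than Borel, since then $g \circ \gamma$ need not be Lebesgue measurable for every $\gamma$. The argument handles this by working throughout with Borel representatives and observing that any two such representatives produce the same value of the line integral for $\Mod_X$-a.e.\ curve, so the notion of $\int_\gamma g \, ds$ is canonically defined outside an exceptional family of zero $X$-modulus.
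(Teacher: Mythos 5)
Your proof is correct and follows essentially the same route as the paper: reduce to Borel representatives via Proposition~\ref{pro:meas_f_has_borel_repr}, collect the discrepancies into a Borel null set $E$, and apply Lemma~\ref{lem:mod_G+_0_meas} to discard $\Gamma_E^+$. The only cosmetic difference is that the paper uses a single Borel function $g$ with $g_1=g=g_2$ a.e.\ and then unites two exceptional families via Lemma~\ref{lem:mod_properties}\,\ref{lem:mod_properties:union_0}, whereas you work with one enlarged null set from the start.
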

\begin{proof}
According to Proposition~\ref{pro:meas_f_has_borel_repr}, there is a non-negative Borel function $g$ such that $g_1 = g = g_2$ a.e. Let $E = \{ x\in\Pcal\colon g_1(x) \neq g(x)\}$. As $g$ is Borel, the integral $\int_\gamma g\,ds$ is well defined for all curves $\gamma$. For curves $\gamma \notin \Gamma_E^+$,
\[
  \int_\gamma g_1\,ds = \int_\gamma g\,ds.
\]
Since $\mu(E)=0$, we have $\Mod_X(\Gamma_E^+) = 0$ by Lemma~\ref{lem:mod_G+_0_meas}. Hence, equality holds for $\Mod_X$-a.e.\@ curve $\gamma$.

A similar argument shows that the equality $\int_\gamma g_2 \,ds = \int_\gamma g\,ds$ holds for $\Mod_X$-a.e.\@ curve $\gamma$. Lemma~\ref{lem:mod_properties}\,\ref{lem:mod_properties:union_0} then finishes the proof.
\end{proof}
%
%
%
%
\section{Weak upper gradients}
\label{sec:wug}
%
%
The set of upper gradients is not a closed subset of $X$, which we have already seen in Example~\ref{exa:trivial_N1p} and similar examples can be provided for non-trivial Newtonian spaces, as well. Another drawback of upper gradients is that they are required to be Borel functions. We can, however, relax the conditions in Definition~\ref{df:ug} to replace the upper gradients with a more flexible set of functions, following the ideas of Koskela and MacManus in~\cite{KosMac}.
%
%
\begin{definition}
A non-negative measurable function $g$ on $\Pcal$ is an \emph{$X$-weak upper gradient} of an extended real-valued function $u$ on $\Pcal$ if 
\begin{equation}
  \label{eq:wug_ineq_def}
  |u(\gamma(0)) - u(\gamma(l_\gamma))| \le \int_\gamma g\,ds
\end{equation}
for $\Mod_X$-a.e.\@ rectifiable curve $\gamma: [0, l_\gamma]\to\Pcal$.
\end{definition}
%
%
\begin{remark}
By Lemma~\ref{lem:mod_ae_equivalence}, the path integral \eqref{eq:wug_ineq_def} is well defined for $\Mod_X$-a.e.\@ curve $\gamma$. Applying Proposition~\ref{pro:meas_f_has_borel_repr} as well, one can see that if a measurable function $g$ is an $X$-weak upper gradient of $u$, then there exists a non-negative Borel function $g'$, which obeys $g'=g$ a.e., and $g'$ is an $X$-weak upper gradient of $u$.
\end{remark}
\begin{remark}
Lemma~\ref{lem:mod_ae_equivalence} also shows that we may modify an $X$-weak upper gradient of a function on a set of measure zero to obtain another $X$-weak upper gradient of the same function. The following example shows that the corresponding claim for upper gradients is false.
\end{remark}
\begin{example}
Let $u: \Rbb^2 \to \Rbb$ be given by $u(x) = |x|$. Then, $g=1$ is an upper gradient of $u$. Let $M$ be the image of a (rectifiable) curve in $\Rbb^2$ of positive length. Then $g' = 1 - \chi_M$ is not an upper gradient of $u$, but $g=g'$ a.e., whence it is an $X$-weak upper gradient of $u$.
\end{example}
Similarly as in the case of upper gradients, we can determine an $X$-weak upper gradient of a linear combination of functions whose $X$-weak upper gradients are known.
%
%
\begin{lemma}
\label{lem:wug.linearity}
Let $g$ and $h$ be $X$-weak upper gradients of $u$ and $v$, respectively, and $a\in\Rbb$. Then, $|a|g$ and $g + h$ are $X$-weak upper gradients of $au$ and $u + v$, respectively.
\end{lemma}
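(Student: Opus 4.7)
The plan is to imitate the proof of Lemma~\ref{lem:ug.linearity} while tracking the exceptional curve families. The only new ingredient compared to that proof is that one needs to combine two zero-modulus families into one, and this is exactly the content of Lemma~\ref{lem:mod_properties}\,\ref{lem:mod_properties:union_0}.

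For the scalar multiple, let $\Gamma_0 \subset \Gamma(\Pcal)$ be the family of curves on which the defining inequality for $(u,g)$ fails, so that $\Mod_X(\Gamma_0) = 0$ by assumption. If $a = 0$ the claim is trivial. If $a \neq 0$, then for every $\gamma \notin \Gamma_0$ we multiply the inequality by $|a|$ to obtain
\[
  |(au)(\gamma(0)) - (au)(\gamma(l_\gamma))| = |a|\,|u(\gamma(0)) - u(\gamma(l_\gamma))| \le \int_\gamma |a|g\,ds,
\]
with the convention $|(\pm\infty)-(\pm\infty)| = \infty$ covering the points where $u$ is infinite. Since $|a|g$ is plainly non-negative and measurable, it is an $X$-weak upper gradient of $au$.

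For the sum, let $\Gamma_1, \Gamma_2$ be the exceptional families for $(u,g)$ and $(v,h)$ respectively, each of zero $X$-modulus. By Lemma~\ref{lem:mod_properties}\,\ref{lem:mod_properties:union_0} we have $\Mod_X(\Gamma_1 \cup \Gamma_2) = 0$. For every curve $\gamma \notin \Gamma_1 \cup \Gamma_2$ both defining inequalities hold, and the triangle inequality on the extended real line gives
\[
  |(u{+}v)(\gamma(0)) - (u{+}v)(\gamma(l_\gamma))| \le |u(\gamma(0)) - u(\gamma(l_\gamma))| + |v(\gamma(0)) - v(\gamma(l_\gamma))| \le \int_\gamma (g{+}h)\,ds.
\]
The only delicate point is interpreting $u+v$ at a point where one summand is $+\infty$ and the other $-\infty$; this is harmless because on such a curve at least one of the endpoint terms on the right forces $\int_\gamma g\,ds$ or $\int_\gamma h\,ds$ to be infinite, making the resulting bound on $(u+v)$ vacuously true under the same $|(\pm\infty)-(\pm\infty)| = \infty$ convention. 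Hence $g+h$ is an $X$-weak upper gradient of $u+v$.

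I do not anticipate any genuine obstacle: once the observation about unions of zero-modulus families is in place, the argument is as short as the one for classical upper gradients, the only residual care being the bookkeeping of the $\pm\infty$ convention.
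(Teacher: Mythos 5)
Your proposal is correct and follows essentially the same route as the paper: identify the two exceptional families, observe that they also serve as exceptional families for $|a|g$ with $au$ and (after taking the union) for $g+h$ with $u+v$, and invoke Lemma~\ref{lem:mod_properties}\,\ref{lem:mod_properties:union_0} to conclude that the union still has zero $X$-modulus. The extra bookkeeping you supply for the $\pm\infty$ conventions is sound but not something the paper's proof spells out.
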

\begin{proof}
Let $\Gamma_1$ be the family of exceptional curves for $g$ with $u$, and $\Gamma_2$ for $h$ with $v$. Then, $\Gamma_1$ is the exceptional family for $|a|g$ with $au$, whereas $\Gamma_1 \cup \Gamma_2$ is the exceptional family for $g+h$ with $u+v$. Lemma~\ref{lem:mod_properties}~\ref{lem:mod_properties:union_0} now ensures that $\Mod_X(\Gamma_1 \cup \Gamma_2)=0$.
\end{proof}
The following lemma shows that $X$-weak upper gradients of a given function can be approximated by its upper gradients with arbitrarily small distance in $X$. Note that here we do not require that the approximated $X$-weak upper gradient lies in $X$.
%
%
\begin{lemma}
\label{lem:ug-approx-wug}
Let $g$ be an $X$-weak upper gradient of $u$. Then, there exist $\rho_k\in X$ such that $g+\rho_k$ is an upper gradient of $u$ for every $k\in\Nbb$ and $\|\rho_k\|_X\to 0$ as $k\to \infty$. In fact, there is $\rho \in X$ such that we may choose $\rho_k = \rho/k$ for every $k\in\Nbb$.
\end{lemma}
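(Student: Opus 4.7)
The plan is to reduce this to Proposition~\ref{pro:mod0_equiv}, which is really the engine behind this sort of statement. Let $\Gamma$ denote the exceptional family for $g$ with $u$, i.e., the collection of non-constant rectifiable curves $\gamma$ for which the upper gradient inequality \eqref{eq:wug_ineq_def} fails (including those $\gamma$ for which the integral $\int_\gamma g\,ds$ is not well defined). By definition of an $X$-weak upper gradient, $\Mod_X(\Gamma)=0$. Applying the implication \ref{pro:mod0_equiv.a}$\Rightarrow$\ref{pro:mod0_equiv.b} of Proposition~\ref{pro:mod0_equiv}, we obtain a single non-negative Borel function $\rho\in X$ such that $\int_\gamma \rho\,ds = \infty$ for every $\gamma\in\Gamma$.

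Before combining $g$ with $\rho$, I would tidy up the Borel measurability of $g$ using the remark preceding the lemma (itself a consequence of Proposition~\ref{pro:meas_f_has_borel_repr} together with Lemma~\ref{lem:mod_ae_equivalence}): we may replace $g$ with a Borel representative that is still an $X$-weak upper gradient of $u$ and agrees with $g$ almost everywhere, so without loss of generality assume $g$ itself is Borel. Then for each $k\in\Nbb$, set $\rho_k = \rho/k$; each $\rho_k$ is a non-negative Borel function and $g+\rho_k$ is therefore a non-negative Borel function on $\Pcal$.

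The verification that $g+\rho_k$ is an upper gradient of $u$ is a two-case analysis over an arbitrary curve $\gamma\colon[0,l_\gamma]\to\Pcal$. If $\gamma\notin\Gamma$, then \eqref{eq:wug_ineq_def} holds for $g$, hence
\[
  |u(\gamma(0))-u(\gamma(l_\gamma))| \le \int_\gamma g\,ds \le \int_\gamma (g+\rho_k)\,ds.
\]
If $\gamma\in\Gamma$, then $\int_\gamma \rho_k\,ds = (1/k)\int_\gamma \rho\,ds = \infty$, so the right-hand side of the defining inequality is infinite and the inequality is trivially satisfied (with the convention that $|(\pm\infty)-(\pm\infty)|=\infty$ from Definition~\ref{df:ug}). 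Finally, by positive homogeneity of the quasi-norm, $\|\rho_k\|_X = \|\rho\|_X/k \to 0$ as $k\to\infty$, which both establishes the approximation claim and gives the stronger assertion that a single $\rho\in X$ works uniformly.

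The only real obstacle is cosmetic: ensuring that $g+\rho_k$ is Borel (not merely measurable), since Definition~\ref{df:ug} requires upper gradients to be Borel. This is why the preliminary replacement of $g$ by a Borel representative is needed; once that is done, the argument is a straightforward packaging of Proposition~\ref{pro:mod0_equiv}.
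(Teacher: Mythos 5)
Your overall strategy is exactly the paper's: pass to the exceptional curve family $\Gamma$, invoke Proposition~\ref{pro:mod0_equiv}\,\ref{pro:mod0_equiv.a}$\Rightarrow$\ref{pro:mod0_equiv.b} to get a single $\rho\in X$ with $\int_\gamma\rho\,ds=\infty$ on $\Gamma$, verify the upper gradient inequality by the two-case split, and conclude by homogeneity of the quasi-norm. However, the step you dismiss as cosmetic is the one genuine difficulty, and your resolution of it is not valid. The lemma asserts that $g+\rho_k$ is an upper gradient of $u$ for the \emph{given} $g$, which is only assumed measurable. Replacing $g$ by a Borel representative $g'$ with $g'=g$ a.e.\@ is not a harmless normalization here, because it changes the conclusion: you end up proving that $g'+\rho/k$ is an upper gradient, not that $g+\rho/k$ is. Upper gradients are defined by a pointwise inequality along every curve and are required to be Borel, and they are \emph{not} stable under modification on null sets --- the paper makes exactly this point with the example $g'=1-\chi_M$ just before Lemma~\ref{lem:wug.linearity}. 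If $g<g'$ on a set that some curve traverses for positive length, the inequality for $g'$ gives you nothing for $g$; and in any case $g+\rho/k$ need not even be Borel.

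The paper's proof closes this gap with a small but essential device: by Borel regularity choose a Borel set $M\supset\{x\colon g(x)\neq g'(x)\}$ with $\mu(M)=0$, and define $\rho_k=\rho/k$ off $M$ and $\rho_k=\infty$ on $M$. Then $g+\rho_k=g'+\rho_k$ \emph{everywhere} (both sides are $\infty$ on $M$ and $g=g'$ off $M$), so $g+\rho_k$ is Borel and inherits the upper gradient property from $g'+\rho_k$, while $\|\rho_k\|_X=\|\rho\|_X/k$ is unchanged since $\mu(M)=0$. The ``single $\rho$'' claim survives by absorbing $\infty\chi_M$ into $\rho$. With this modification your argument becomes the paper's proof; without it, what you have proved is the weaker statement that \emph{some} a.e.-representative of $g$ can be perturbed into an upper gradient, which is not what the lemma says.
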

\begin{proof}
We can find a non-negative Borel function $g'$ such that $g'=g$ a.e. Lemma~\ref{lem:mod_ae_equivalence} shows that $g'$ is an $X$-weak upper gradient of $u$ as well.
Let $\Gamma$ consist of those curves $\gamma: [0, l_\gamma] \to \Pcal$ such that
\[
  |u(\gamma(0)) - u(\gamma(l_\gamma))| \nleq  \int_\gamma g'\,ds.
\]
Therefore, $\Mod_X(\Gamma) = 0$, and hence, by Proposition~\ref{pro:mod0_equiv}, there is a non-negative Borel function $\rho\in X$ such that $\int_\gamma \rho\,ds = \infty$ for all $\gamma \in \Gamma$. Due to Borel regularity of the measure on $\Pcal$, there is a Borel set $M$ of zero measure such that it contains the set $\{x\in\Pcal: g(x) \neq g'(x)\}$. Finally, let
\[
  \rho_k(x) = \begin{cases}
                 \dfrac{\rho(x)}{k} & \mbox{for $x\in \Pcal \setminus M$}, \\
                 \,\infty & \mbox{for $x\in M$}.
              \end{cases}
\]
Then, $g+ \rho_k = g' + \rho_k$ is a Borel function. It is also an upper gradient of $u$ and $\| \rho_k \|_X = \|\rho\|_X / k \to 0$ as $k\to\infty$.
\end{proof}
Consequently, we could have defined the $\NX$ (quasi)seminorm using $X$-weak upper gradients instead of upper gradients as is proven in the following corollary.
%
%
\begin{corollary}
\label{cor:def_N1X_via_WUGs}
Let $u\in \Mcal(\Pcal, \mu)$. Then,
\[
  \|u\|_\NX = \|u\|_X + \inf_g \|g\|_X\,,
\]
where the infimum is taken over all $X$-weak upper gradients $g$ of $u$.
\end{corollary}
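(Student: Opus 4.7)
The plan is to compare the two infima directly. Denote the right-hand side by $J(u) \coloneq \|u\|_X + \inf_g \|g\|_X$, with the infimum taken over $X$-weak upper gradients of $u$, while by definition $\|u\|_\NX = \|u\|_X + \inf_g \|g\|_X$ with the infimum taken over (ordinary) upper gradients.

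The inequality $J(u) \le \|u\|_\NX$ is immediate: any upper gradient of $u$ satisfies \eqref{eq:wug_ineq_def} for \emph{every} rectifiable curve, hence a fortiori for $\Mod_X$-a.e.\ curve, so it is in particular an $X$-weak upper gradient. Therefore the infimum defining $J(u)$ is taken over a larger collection of functions, and monotonicity of the infimum yields the bound.

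For the reverse inequality, I may assume $J(u) < \infty$. Fix $\eps > 0$ and choose an $X$-weak upper gradient $g$ of $u$ with $\|g\|_X < J(u) - \|u\|_X + \eps$. By Lemma~\ref{lem:ug-approx-wug}, there exists $\rho \in X$ such that $g_k \coloneq g + \rho/k$ is an upper gradient of $u$ for every $k \in \Nbb$. Since $\|g_k - g\|_X = \|\rho\|_X/k \to 0$ as $k \to \infty$, the standing continuity assumption on $\|\cdot\|_X$ gives $\|g_k\|_X \to \|g\|_X$. Because each $g_k$ is an upper gradient of $u$, one has $\|u\|_\NX \le \|u\|_X + \|g_k\|_X$ for every $k$; letting $k \to \infty$ yields $\|u\|_\NX \le \|u\|_X + \|g\|_X < J(u) + \eps$, and letting $\eps \to 0$ completes the proof.

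The one subtlety worth flagging is the appeal to continuity of the quasi-norm: in a genuine quasi-Banach setting the triangle inequality carries a constant $c \ge 1$, so estimating $\|g_k\|_X$ by $c(\|g\|_X + \|\rho/k\|_X)$ would leak a factor $c$ into the final bound. It is precisely the assumed continuity of $\|\cdot\|_X$ (guaranteed, up to equivalence, by the Aoki--Rolewicz theorem) that lets us pass to the limit $\|g_k\|_X \to \|g\|_X$ cleanly and recover the sharp equality.
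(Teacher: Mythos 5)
Your proof is correct and follows essentially the same route as the paper: both reduce the reverse inequality to Lemma~\ref{lem:ug-approx-wug} together with the standing continuity of the quasi-norm. The only (cosmetic) difference is that you fix a single near-optimal $X$-weak upper gradient $g$ and approximate it by the upper gradients $g+\rho/k$, whereas the paper runs the same approximation along a minimizing sequence of weak upper gradients; your variant is in fact marginally cleaner, since the continuity assumption is invoked with a fixed limit function.
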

\begin{proof}
Let $m$ be the infimum as in the claim. Let $\widetilde{m}= \inf_g \|g\|_X$, where the infimum is taken only over all upper gradients $g$ of $u$. We immediately obtain that $m\le \widetilde{m}$. If $m=\infty$, then obviously $m=\widetilde{m}$. Suppose now that $m<\infty$. Then there exists a sequence of $X$-weak upper gradients $\{g_k\}_{k=1}^\infty$ of $u$ such that $\|g_k\|_X \to m$ as $k\to\infty$. By Lemma~\ref{lem:ug-approx-wug}, there are upper gradients $\tilde{g}_k$ of $u$ such that $\|g_k - \tilde{g}_k\|_X < 1/k$ for all $k\in\Nbb$. Therefore, $\|\tilde{g}_k\|_X \to m$ as $k\to\infty$, and hence $\widetilde{m}\le m$. This fact leads to equality $\|u\|_X + \widetilde{m} = \|u\|_X + m$, which finishes the proof.
\end{proof}
%
%
\begin{definition}
A Borel function $g: \Pcal\to [0, \infty]$ is called an \emph{upper gradient of $u$ along a curve $\gamma$} if it satisfies inequality \eqref{eq:ug_def} for every subcurve $\gamma'$ of $\gamma$.
\end{definition}
%
%
\begin{corollary}
\label{cor:not_ug_along_curves}
If $g$ is an $X$-weak upper gradient of $u$ on $\Pcal$ and
\[
  \Gamma = \{\gamma \in \Gamma(\Pcal)\colon \mbox{$g$ is not an upper gradient of $u$ along $\gamma$}\},
\]
then $\Mod_X(\Gamma) = 0$.
\end{corollary}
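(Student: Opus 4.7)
The plan is to identify the exceptional family $\Gamma_0$ given to us by the hypothesis that $g$ is an $X$-weak upper gradient, and then exhibit each curve in $\Gamma$ as a curve admitting a subcurve in $\Gamma_0$. Then Lemma~\ref{lem:mod_properties}\,\ref{lem:mod_properties:subcurve_family} immediately gives the conclusion.

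In detail, set
\[
  \Gamma_0 = \Bigl\{ \gamma \in \Gamma(\Pcal) : |u(\gamma(0)) - u(\gamma(l_\gamma))| > \int_\gamma g\,ds \Bigr\}.
\]
By the definition of an $X$-weak upper gradient we have $\Mod_X(\Gamma_0) = 0$; if $g$ itself is not Borel, first pass to a Borel representative $g'$ with $g' = g$ a.e.\@ as in the remark following the definition of an $X$-weak upper gradient, noting by Lemma~\ref{lem:mod_ae_equivalence} that replacing $g$ with $g'$ does not alter $\int_\gamma g\,ds$ for $\Mod_X$-a.e.\@ curve, so the associated exceptional set is still of $X$-modulus zero.

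Now let $\gamma \in \Gamma$. By the definition of $\Gamma$ together with the definition of being an upper gradient along a curve, there is some subcurve $\gamma'$ of $\gamma$ for which the inequality \eqref{eq:ug_def} fails, i.e.\@ $\gamma' \in \Gamma_0$. Thus every $\gamma \in \Gamma$ contains a subcurve lying in $\Gamma_0$, and Lemma~\ref{lem:mod_properties}\,\ref{lem:mod_properties:subcurve_family} yields $\Mod_X(\Gamma) \le \Mod_X(\Gamma_0) = 0$.

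The only subtle point is bookkeeping about Borel versus merely measurable $g$, which is handled once and for all by the Borel-representative trick in the first step; after that, the argument is essentially an immediate consequence of the monotonicity of the modulus with respect to passing to subcurve families.
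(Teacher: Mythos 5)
Your proposal is correct and follows essentially the same route as the paper: define the exceptional family of curves on which inequality \eqref{eq:ug_def} fails (of zero $X$-modulus by the definition of an $X$-weak upper gradient), observe that every curve in $\Gamma$ has a subcurve in that family, and conclude via Lemma~\ref{lem:mod_properties}\,\ref{lem:mod_properties:subcurve_family}. The extra care about Borel representatives is harmless but not needed in the paper's version, which defines the exceptional family with $\nleq$ so that ill-defined integrals are automatically included.
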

\begin{proof}
Let $\Gamma'$ consist of those curves $\gamma': [0, l_{\gamma'}] \to \Pcal$ for which
\[
  |u({\gamma'}(0)) - u(\gamma'(l_{\gamma'}))| \nleq \int_{\gamma'} g\,ds.
\]
Then, $\Mod_X(\Gamma') = 0$ by the definition of weak upper gradient. Moreover, each curve $\gamma \in \Gamma$ has a subcurve $\gamma' \in \Gamma'$, whence $\Mod_X(\Gamma) \le \Mod_X(\Gamma') = 0$ by Lemma~\ref{lem:mod_properties}\,\ref{lem:mod_properties:subcurve_family}.
\end{proof}
Having a fixed set, we shall find a close relation between its negligibility in terms of $X$-capacity and and in terms of $X$-modulus of the family of curves intersecting it.
%
%
\begin{proposition}
\label{pro:cap0_meas0_mod0}
Let $E\subset \Pcal$. Then, $C_X(E) = 0$ if and only if $\mu(E) = \Mod_X(\Gamma_E) = 0$.
\end{proposition}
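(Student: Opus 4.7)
The proof divides naturally into the two implications, and I would handle them in order of difficulty.

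\emph{Forward direction.} That $C_X(E)=0$ implies $\mu(E)=0$ is immediate from Theorem~\ref{thm:CX-properties}\,\ref{itm:CX-vs-measure}, so the substantive task is to establish $\Mod_X(\Gamma_E)=0$. Using $C_X(E)=0$ together with Proposition~\ref{pro:capacity_def}, I would select for every $n\in\Nbb$ a function $u_n$ with $\chi_E\le u_n\le 1$ and an upper gradient $g_n$ satisfying $\|u_n\|_X+\|g_n\|_X<(2c)^{-n}$, where $c\ge 1$ is the modulus of concavity of $X$. The Riesz--Fischer property~\ref{df:qBFL.RF} then places both $u:=\sum_{n=1}^\infty u_n$ and $g:=\sum_{n=1}^\infty g_n$ in $X$, and pointwise $u=\infty$ on $E$.

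The main difficulty is that $g$ is not automatically an upper gradient of $u$ on curves through $\{u=\infty\}$, so I would partition $\Gamma_E=A\cup B$, where $A$ consists of those $\gamma\in\Gamma_E$ on which $u$ is finite at some point and $B$ of those on which $u\equiv\infty$. For $\gamma\in A$, choosing $y\in\gamma$ with $u(y)<\infty$ and $x\in\gamma\cap E$ and applying the upper gradient inequality (Lemma~\ref{lem:ug.linearity}) to the finite partial sums $\sum_{n=1}^N u_n$ along the subcurve from $y$ to $x$ yields $N-u(y)\le\int_\gamma g$, whence $\int_\gamma g=\infty$ after letting $N\to\infty$; Proposition~\ref{pro:mod0_equiv}, applied with a Borel version of $g$, then gives $\Mod_X(A)=0$. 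For $\gamma\in B$, the image of $\gamma$ lies inside the $\mu$-measurable set $F:=\{u=\infty\}$, which has $\mu(F)=0$ because $u\in X$; hence $B\subset\Gamma_F^+$, which has $X$-modulus zero by Lemma~\ref{lem:mod_G+_0_meas}. Combining via Lemma~\ref{lem:mod_properties}\,\ref{lem:mod_properties:union_0} yields $\Mod_X(\Gamma_E)=0$.

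\emph{Reverse direction.} Given $\mu(E)=\Mod_X(\Gamma_E)=0$, Proposition~\ref{pro:mod0_equiv} supplies a Borel $\rho\in X$ with $\int_\gamma\rho=\infty$ for every $\gamma\in\Gamma_E$. Since $E$ is $\mu$-measurable and $\|\chi_E\|_X=0$, the function $\chi_E$ is admissible in the capacity infimum, and I would show that $\rho/k$ is an upper gradient of $\chi_E$ for each $k\in\Nbb$: the upper gradient inequality is trivial unless exactly one of the endpoints lies in $E$, but then $\gamma\in\Gamma_E$ forces the right-hand side to be infinite. It follows that $C_X(E)\le\|\chi_E\|_\NX\le\|\rho\|_X/k\to 0$, completing the proof.
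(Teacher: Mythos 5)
Your proof is correct, and while it is built on the same core construction as the paper's (summing near-optimal admissible pairs $u=\sum_n u_n$, $g=\sum_n g_n$ and using the Riesz--Fischer property~\ref{df:qBFL.RF} to keep them in $X$), the bookkeeping differs in both directions in ways worth noting. In the forward direction the paper does not split $\Gamma_E$ directly; it sets $F=\{u=\infty\}\supset E$, $\Gamma_1=\{\gamma:\int_\gamma g\,ds=\infty\}$, $\Gamma_2=\{\gamma:\gamma\subset F\}$, and shows that any curve avoiding $\Gamma_1\cup\Gamma_2$ must miss $F$ entirely, via the estimate $u(z)\le u(x)+\int_\gamma g\,ds$; your telescoping bound $N-u(y)\le\int_\gamma g\,ds$ on partial sums is the same inequality read contrapositively, and your use of Lemma~\ref{lem:mod_G+_0_meas} for the curves with $u\equiv\infty$ replaces the paper's appeal to Proposition~\ref{pro:mod0_equiv} with the test function $u$ itself (via $\int_\gamma u\,ds=\infty$); both are equally valid, and yours sidesteps the question of whether $u$ is Borel. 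In the reverse direction the routes genuinely diverge: the paper observes that $0$ is an $X$-weak upper gradient of $\chi_E$ (since $\chi_E\equiv 0$ on every curve outside $\Gamma_E$) and invokes the weak-upper-gradient characterization of $\|\cdot\|_\NX$ from Corollary~\ref{cor:def_N1X_via_WUGs}, whereas you build genuine upper gradients $\rho/k$ from Proposition~\ref{pro:mod0_equiv} and let $k\to\infty$. Your version is more elementary in that it needs no weak-upper-gradient machinery at all, at the modest cost of an extra limiting argument; this matters if one wants the proposition available before Section~\ref{sec:wug} is developed.
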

\begin{proof}
Assume first that $\mu(E) = \Mod_X(\Gamma_E) = 0$. Let $u = \chi_E$. Then, $g\equiv0$ is an $X$-weak upper gradient of $u$ since $u\equiv0$ on all curves outside $\Gamma_E$, i.e., on $\Mod_X$-a.e.\@ curve. Hence, $C_X(E) \le \|u\|_{\NX} = 0$ as $u=0$ a.e.

Suppose now that $C_X(E) = 0$. It follows from Theorem~\ref{thm:CX-properties}~\ref{itm:CX-vs-measure} that $\mu(E) = 0$. Let $\{u_j\}_{j=1}^\infty$ be a sequence of functions in $\NX$ with their respective upper gradients $g_j$ such that $\chi_E \le u_j \le 1$, while $\|u_j\|_{\NX} < (2c)^{-j}$ as well as $\|g_j\|_{X} < (2c)^{-j}$ for all $j\in\Nbb$. Let $u=\sum_{j=1}^\infty u_j \in X$ and $g=\sum_{j=1}^\infty g_j \in X$.
Let
\begin{align*}
  F &= \{x\in \Pcal: u(x) = \infty\} \supset E, \\
  \Gamma_1 & = \Bigl\{\gamma\in\Gamma(\Pcal): \int_\gamma g\,ds = \infty\Bigr\}, \\
  \Gamma_2 & = \{\gamma\in\Gamma(\Pcal): \gamma \subset F\} \subset \Bigl\{\gamma\in\Gamma(\Pcal): \int_\gamma u\,ds = \infty\Bigr\}.
\end{align*}
Now, $\Mod_X(\Gamma_1) = 0$ by Proposition~\ref{pro:mod0_equiv} as $g\in X$. Since $u\in X$, we similarly obtain that $\Mod_X(\Gamma_2) = 0$. What remains to be proven is that $\Gamma_F \subset \Gamma_1 \cup \Gamma_2$. Therefore, let $\gamma \in \Gamma(\Pcal)\setminus (\Gamma_1 \cup \Gamma_2)$. Then, there is $x\in \gamma \setminus F$, i.e., $u(x) < \infty$. For every $z \in \gamma$ we have
\begin{align*}
  u(z) & = \sum_{j=1}^\infty u_j(z) \le \sum_{j=1}^\infty u_j(x) + \sum_{j=1}^\infty |u_j(z)-u_j(x)| \\
  & \le u(x) + \sum_{j=1}^\infty \int_\gamma g_j\,ds = u(x) + \int_\gamma g\,ds < \infty,
\end{align*}
whence $\gamma \cap F = \emptyset$. Finally, $\Mod_X(\Gamma_E) \le \Mod_X(\Gamma_F) \le \Mod_X(\Gamma_1\cup \Gamma_2) = 0$.
\end{proof}
Previously, we have seen that modifying an $X$-weak upper gradient on a set of measure zero preserves its properties. The following corollary shows that modifying a function on a set of $X$-capacity zero retains its $X$-weak upper gradients.
%
%
\begin{corollary}
\label{cor:equal_qe_same_wug}
If $u=v$ q.e.\@ and $g$ is an $X$-weak upper gradient of $u$, then $g$ is also an $X$-weak upper gradient of $v$.
\end{corollary}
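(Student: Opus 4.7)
The plan is to reduce the statement to the corresponding fact for $X$-modulus via Proposition~\ref{pro:cap0_meas0_mod0}. Let $E = \{x \in \Pcal : u(x) \neq v(x)\}$. By hypothesis $C_X(E) = 0$, so Proposition~\ref{pro:cap0_meas0_mod0} gives both $\mu(E) = 0$ and $\Mod_X(\Gamma_E) = 0$, where $\Gamma_E$ is the family of curves hitting $E$.

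Next, since $g$ is an $X$-weak upper gradient of $u$, there is an exceptional family $\Gamma_u$ of curves with $\Mod_X(\Gamma_u) = 0$ outside of which the defining inequality holds for $u$. Consider any curve $\gamma \notin \Gamma_u \cup \Gamma_E$. Because $\gamma \notin \Gamma_E$, the curve avoids $E$ entirely, so in particular $u(\gamma(0)) = v(\gamma(0))$ and $u(\gamma(l_\gamma)) = v(\gamma(l_\gamma))$. Hence
\[
  |v(\gamma(0)) - v(\gamma(l_\gamma))| = |u(\gamma(0)) - u(\gamma(l_\gamma))| \le \int_\gamma g\,ds.
\]
Finally, Lemma~\ref{lem:mod_properties}\,\ref{lem:mod_properties:union_0} yields $\Mod_X(\Gamma_u \cup \Gamma_E) = 0$, so the inequality defining an $X$-weak upper gradient of $v$ holds for $\Mod_X$-a.e.\ curve. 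Measurability of $g$ is unchanged, so $g$ is indeed an $X$-weak upper gradient of $v$.

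There is essentially no obstacle here; the whole argument is a routine application of Proposition~\ref{pro:cap0_meas0_mod0} combined with the countable union property of $\Mod_X$-null families. The only conceptual point worth noting is that the implication $C_X(E) = 0 \Rightarrow \Mod_X(\Gamma_E) = 0$ is exactly what makes $X$-capacity (rather than measure) the natural notion of negligibility in the Newtonian setting: modifying a function on a set of measure zero need not preserve upper gradient inequalities along curves, but modifying on a $C_X$-null set does.
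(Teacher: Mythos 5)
Your argument is correct and is essentially the paper's own proof: the paper likewise passes from $C_X(E)=0$ to $\Mod_X(\Gamma_E)=0$ via Proposition~\ref{pro:cap0_meas0_mod0} and concludes that $u=v$ along $\Mod_X$-a.e.\ curve, so the upper gradient inequality transfers to $v$ outside the union of the two null families. The only cosmetic difference is that the paper states the conclusion in terms of $g$ being an upper gradient of $v$ \emph{along} $\Mod_X$-a.e.\ curve (i.e.\ including subcurves), which is slightly stronger than needed; your endpoint-only verification already suffices for the definition.
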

\begin{proof}
Let $E=\{x\in \Pcal: u(x)\neq v(x)\}$. Then, $C_X(E)=0$ and hence $\Mod_X(\Gamma_E) = 0$. Consequently, $u=v$ along $\Mod_X$-a.e.\@ curve, which implies that $g$ is an upper gradient of $v$ along $\Mod_X$-a.e.\@ curve.
\end{proof}
The next corollary provides us with an alternative definition of an $X$-weak upper gradient of an a.e.\@ finite function. It shows that it does not really matter how we interpret the inequality \eqref{eq:wug_ineq_def} when the left-hand side is $|(\pm \infty) - (\pm \infty)|$. 
%
%
\begin{proposition}
\label{pro:alt-def-wug}
Let $u: \Pcal \to \overline{\Rbb}$ be a function which is finite a.e.\@ and assume that $g\ge0$ is such that for $\Mod_X$-a.e.\@ curve $\gamma: [0, l_\gamma] \to \Pcal$ it is true that either
\begin{equation}
  \label{eq:alt-def-wug}
  |u(\gamma(0))| = |u(\gamma(l_\gamma))| = \infty
  \quad \mbox{or}
  \quad
  |u(\gamma(0)) - u(\gamma(l_\gamma))| \le \int_\gamma g\,ds.
\end{equation}
Then, $g$ is an $X$-weak upper gradient of $u$.
\end{proposition}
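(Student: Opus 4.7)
Proof plan. The convention $|(\pm\infty)-(\pm\infty)|=\infty$ built into Definition~\ref{df:ug} means that the alternative hypothesis is weaker than the definition of $X$-weak upper gradient precisely in the case when both endpoints of $\gamma$ are infinite: the alternative allows us to skip the inequality in this case, whereas the genuine definition then requires $\int_\gamma g\,ds=\infty$. So the task reduces to upgrading the "either" branch into the integral equaling $\infty$ along $\Mod_X$-almost every relevant curve, and for this I would exploit the fact that $u$ is finite a.e.\ together with the sub-curve lemma.

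Let $\Gamma_{\text{bad}}$ denote the family of curves for which \eqref{eq:alt-def-wug} fails; by hypothesis $\Mod_X(\Gamma_{\text{bad}}) = 0$. Let
\[
  \Gamma' = \{\gamma\in\Gamma(\Pcal) : \gamma \text{ has some subcurve in } \Gamma_{\text{bad}}\}.
\]
Since each $\gamma\in\Gamma'$ has a subcurve in $\Gamma_{\text{bad}}$, Lemma~\ref{lem:mod_properties}\,\ref{lem:mod_properties:subcurve_family} yields $\Mod_X(\Gamma')\le\Mod_X(\Gamma_{\text{bad}}) = 0$. Next, let $E=\{x\in\Pcal : |u(x)|=\infty\}$. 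Since $\mu(E) = 0$, Lemma~\ref{lem:mod_G+_0_meas} gives $\Mod_X(\Gamma_E^+) = 0$. Finally, by Lemma~\ref{lem:mod_properties}\,\ref{lem:mod_properties:union_0}, the exceptional family $\Gamma_{\text{exc}} = \Gamma_{\text{bad}}\cup\Gamma'\cup\Gamma_E^+$ still has $X$-modulus zero.

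Now fix any $\gamma\colon[0,l_\gamma]\to\Pcal$ outside $\Gamma_{\text{exc}}$ and split into cases. If at most one of $|u(\gamma(0))|$ and $|u(\gamma(l_\gamma))|$ equals $\infty$, the first alternative in \eqref{eq:alt-def-wug} fails, so the second must hold, and this is exactly \eqref{eq:wug_ineq_def}. Suppose instead that $|u(\gamma(0))| = |u(\gamma(l_\gamma))| = \infty$. Because $\gamma\notin\Gamma_E^+$, we have $\lambda^1(\gamma^{-1}(E))=0$, and since $l_\gamma > 0$ we can pick some $t\in(0,l_\gamma)$ with $\gamma(t)\notin E$, i.e.\ $|u(\gamma(t))|<\infty$. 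Consider the subcurve $\gamma_1 = \gamma|_{[0,t]}$. Because $\gamma\notin\Gamma'$, $\gamma_1\notin\Gamma_{\text{bad}}$, so \eqref{eq:alt-def-wug} holds along $\gamma_1$; the first alternative is impossible (since $|u(\gamma(t))|<\infty$), so the second must hold. Combined with $|u(\gamma(0))|=\infty$, the convention forces $\int_{\gamma_1} g\,ds = \infty$, and hence $\int_\gamma g\,ds = \infty$, so \eqref{eq:wug_ineq_def} holds trivially (both sides equal $\infty$).

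In every case, \eqref{eq:wug_ineq_def} is satisfied for $\gamma\notin\Gamma_{\text{exc}}$, which is a family of full $X$-modulus, so $g$ is an $X$-weak upper gradient of $u$. The only non-routine step is the sub-curve cutting trick, which is what lets us convert the innocuous-looking "both endpoints are infinite" alternative into the required integral equality by falling back on the a.e.\ finiteness of $u$.
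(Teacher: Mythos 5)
Your proof is correct and follows essentially the same route as the paper's: both build the exceptional family from curves having a subcurve violating \eqref{eq:alt-def-wug} (via Lemma~\ref{lem:mod_properties}\,\ref{lem:mod_properties:subcurve_family}) together with curves meeting $E=\{|u|=\infty\}$ on a set of positive length, and then split a good curve at a point where $u$ is finite. The only cosmetic differences are that the paper uses $\Gamma_E^*=\{\gamma\subset E\}$ and a triangle inequality over both subcurves, whereas you use $\Gamma_E^+$ and observe that a single subcurve already forces $\int_\gamma g\,ds=\infty$.
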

Such a characterization of weak upper gradients was originally given by Bj\"{o}rn, Bj\"{o}rn, and Parviainen in~\cite{BjoBjoPar} for the $L^p$ case.
\begin{proof}
Let $\Gamma$ be the set of all curves that have a subcurve for which \eqref{eq:alt-def-wug} does not hold. Then, $\Mod_X(\Gamma)=0$ by Lemma~\ref{lem:mod_properties}\,\ref{lem:mod_properties:subcurve_family}. Let $E = \{x\in \Pcal: |u(x)| = \infty\}$ and $\Gamma^*_E = \{\gamma\in\Gamma(\Pcal): \gamma \subset E\}$. We have $\Mod_X(\Gamma^*_E) \le \Mod_X(\Gamma_E^+)$, which is equal to zero by Lemma~\ref{lem:mod_G+_0_meas} as $\mu(E)=0$. Let now $\gamma \in \Gamma(\Pcal) \setminus (\Gamma \cup \Gamma_E^*)$. Then, there is $t\in[0, l_\gamma]$ such that $\gamma(t) \notin E$. If $t=0$ or $t=l_\gamma$, then
\[
  |u(\gamma(0)) - u(\gamma(l_\gamma))| \le \int_\gamma g\,ds
\]
by the hypotheses. Otherwise,
\begin{align*}
  |u(\gamma(0)) - u(\gamma(l_\gamma))| & \le |u(\gamma(0)) - u(\gamma(t))| + |u(\gamma(t)) - u(\gamma(l_\gamma))| \\
  & \le \int_{\gamma\mid_{[0, t]}} g\,ds + \int_{\gamma\mid_{[t, l_\gamma]}} g\,ds = \int_\gamma g\, ds
\end{align*}
because the second alternative in \eqref{eq:alt-def-wug} holds for both $\gamma\mid_{[0, t]}$ and $\gamma\mid_{[t, l_\gamma]}$. Therefore, $g$ is an $X$-weak upper gradient of $u$ since $\Mod_X(\Gamma \cup \Gamma_E^*) = 0$ due to Lemma~\ref{lem:mod_properties}\,\ref{lem:mod_properties:union_0}.
\end{proof}
%
%
%
%
\section{Absolute continuity along curves}
\label{sec:acc}
%
%
Due to work of Beppo Levi~\cite[Section 3]{Lev}, which goes as far back as 1906, it is well known that the classical Sobolev space $W^{1,p}(\Rbb^n)$ can be characterized as the space of functions in $L^p(\Rbb^n)$, which have a representative that is ACL, i.e., absolutely continuous on almost every line parallel to the coordinate axes and whose (classical) partial derivatives belong to $L^p(\Rbb^n)$ as well, see e.g.\@ Ziemer~\cite[Theorem 2.1.4]{Zie}.

On metric spaces there are no distinctively preferable lines; however, we can study the (absolute) continuity of Newtonian functions on curves. Since Levi's characterization allowed a certain number of exceptional lines, it is natural to expect that there are exceptional curves in our setting as well.
%
%
\begin{definition}
\label{df:DX}
A measurable function belongs to the \emph{Dirichlet space} $DX$ if it has an upper gradient in $X$.
\end{definition}
%
%
\begin{remark}
Due to Lemma~\ref{lem:ug-approx-wug}, we can equivalently define the Dirichlet space $DX$ by requiring existence of an $X$-weak upper gradient lying in $X$. We can easily see that $DX$ is a linear space, with $\NX$ as a subspace. Moreover, the proof of Theorem~\ref{thm:N1X-lattice} shows that $DX$ is a lattice.
\end{remark}
%
%
\begin{definition}
\label{df:AC}
A function $f: [a, b] \to \Rbb$ is \emph{absolutely continuous} on $[a, b]$, abbreviated as $f\in \AC([a,b])$, if for every $\eps > 0$ there is $\delta>0$ such that
\[
  \sum_{j=1}^n |f(b_j) - f(a_j)| < \eps
\]
for any $n\in\Nbb$ and any $a\le a_1 < b_1 \le a_2 <b_2 \le \cdots \le a_n < b_n \le b$ such that
\[
  \sum_{j=1}^n (b_j - a_j) < \delta.
\]
\end{definition}
\begin{remark}
It can be proven (cf.\@ Rudin~\cite[Theorem 7.20]{Rud}) that $f\in\AC([a, b])$ if and only if $f$ is differentiable a.e.\@ in $[a,b]$, while $f'\in L^1([a,b])$ and
\[
  f(x) = f(a) + \int_a^x f'(t)\,dt \quad \mbox{for all } x\in [a,b].
\]
\end{remark}
\begin{definition}
\label{df:ACCX}
A function $u: \Pcal\to\overline{\Rbb}$ is \emph{absolutely continuous on $\Mod_X$-a.e.\@ curve}, shortened as $u\in \ACC_X(\Pcal)$, if the function $u\circ \gamma: [0, l_\gamma] \to \Rbb$ is absolutely continuous for all curves $\gamma: [0, l_\gamma] \to \Pcal$ except perhaps for a family of curves with zero $X$-modulus.
\end{definition}
%
%
\begin{lemma}
Suppose $\alpha \in \Rbb$ and $w: \Rbb\to \Rbb$ is a Lipschitz function. If $u, v$ are in $\ACC_X(\Pcal)$, then so are $u\pm v$, $\alpha u$, $uv$, $\max\{u,v\}$, $\min\{u,v\}$, $w\circ u$, $|u|$, $u^+$, and $u^-$, where $w(\pm \infty)$ may be defined arbitrarily. In particular, $\ACC_X(\Pcal)$ is a lattice.
\end{lemma}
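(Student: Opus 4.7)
The plan is to reduce everything to classical facts about absolute continuity of real-valued functions on compact intervals of the real line, exploiting the countable additivity of $\Mod_X$-negligible families from Lemma~\ref{lem:mod_properties}\,\ref{lem:mod_properties:union_0}.

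First, I would fix exceptional curve families $\Gamma_u, \Gamma_v \subset \Gamma(\Pcal)$ with $\Mod_X(\Gamma_u) = \Mod_X(\Gamma_v) = 0$ such that $u \circ \gamma \in \AC([0, l_\gamma])$ for every $\gamma \notin \Gamma_u$, and analogously for $v$. Note that an AC function on a compact interval is real-valued by Definition~\ref{df:AC}, so for $\gamma \notin \Gamma_u$ the composition $u \circ \gamma$ is automatically finite everywhere on $[0, l_\gamma]$ (in particular, the set of curves where $u$ takes infinite values contributes only to $\Gamma_u$). Set $\Gamma_{u,v} = \Gamma_u \cup \Gamma_v$; by Lemma~\ref{lem:mod_properties}\,\ref{lem:mod_properties:union_0} we have $\Mod_X(\Gamma_{u,v}) = 0$.

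Next, for each curve $\gamma \notin \Gamma_{u,v}$, both $f \coloneq u\circ\gamma$ and $h \coloneq v\circ\gamma$ lie in $\AC([0, l_\gamma])$, hence are continuous and therefore bounded on the compact interval $[0, l_\gamma]$. I would then invoke the following classical closure properties of $\AC([0, l_\gamma])$: (i) linear combinations $\alpha f + \beta h$ are AC directly from the definition; (ii) the product $fh$ is AC, using the identity $|f(b_j)h(b_j) - f(a_j)h(a_j)| \le \|h\|_\infty |f(b_j) - f(a_j)| + \|f\|_\infty|h(b_j) - h(a_j)|$ on each disjoint interval, which is finite because $f, h$ are bounded; (iii) $|f|$, $\max\{f, h\} = \tfrac12(f + h + |f - h|)$, and $\min\{f, h\} = \tfrac12(f + h - |f - h|)$ are AC because $\bigl||f(b_j)| - |f(a_j)|\bigr| \le |f(b_j) - f(a_j)|$; (iv) if $w \colon \Rbb \to \Rbb$ is $L$-Lipschitz, then $w \circ f$ is AC since $|w(f(b_j)) - w(f(a_j))| \le L|f(b_j) - f(a_j)|$. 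Note that the special values $u^+ = \max\{u, 0\}$ and $u^- = \max\{-u, 0\}$ fall under case (iii) with $v \equiv 0$. For $w \circ u$, the prescribed value $w(\pm\infty)$ is irrelevant because on $\gamma \notin \Gamma_u$ the function $u \circ \gamma$ never attains $\pm\infty$.

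Combining these observations, each of the listed operations applied to $u, v$ yields a function whose exceptional family is contained in $\Gamma_{u,v}$, which has zero $X$-modulus. Thus every such function belongs to $\ACC_X(\Pcal)$, and the family is closed under $\max$ and $\min$, so it is a lattice. The only mild subtleties are the boundedness argument needed for the product (which requires passing through the boundedness of continuous functions on the compact interval $[0, l_\gamma]$ rather than a global bound on $\Pcal$) and the observation that $w(\pm\infty)$ plays no role; neither is a real obstacle, and the proof reduces essentially to bookkeeping on exceptional curve families.
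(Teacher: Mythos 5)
Your proof is correct and follows essentially the same route as the paper: take the union of the two exceptional curve families (negligible by Lemma~\ref{lem:mod_properties}\,\ref{lem:mod_properties:union_0}), reduce to closure properties of $\AC([0,l_\gamma])$ on each remaining curve, and note that $u\circ\gamma$ is real-valued and bounded there so that $w(\pm\infty)$ is irrelevant. The only difference is that you prove the classical one-dimensional closure facts directly, whereas the paper cites them from Bj\"{o}rn and Bj\"{o}rn (Lemma~1.58); your inline arguments for sums, products, $|f|$, $\max$, $\min$, and Lipschitz compositions are all sound.
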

\begin{proof}
Let $\Gamma_1$ and $\Gamma_2$ be the families of the exceptional curves for $u$ and $v$, respectively. Then, $\Mod_X(\Gamma_1 \cup \Gamma_2) = 0$ by Lemma~\ref{lem:mod_properties}\,\ref{lem:mod_properties:union_0}. Consider a curve $\gamma:[0, l_\gamma] \to \Pcal$, which lies in $\Gamma(\Pcal) \setminus (\Gamma_1 \cup \Gamma_2)$, and let $f = u\circ\gamma$ and $g = v \circ \gamma$. Then, $f,g \in \AC([0, l_\gamma])$ and the absolute continuity of $f + g$, $fg$, $\max\{f,g\}$, and $w\circ f$ on $[0, l_\gamma]$ follows from Lemma~1.58 in Bj\"{o}rn and Bj\"{o}rn~\cite{BjoBjo}. Note that $f$ is a bounded function, so the values of $w$ at the infinities do not have any effect on the function $w\circ f$. Therefore, $u+v$, $uv$, $\max\{u,v\}$, and $w\circ u$ are absolutely continuous along $\gamma$. 

Absolute continuity  of all the remaining functions in the claim along almost every curve follows from the facts which have just been established.
\end{proof}
The following theorem shows that Dirichlet functions, and in particular Newtonian functions, are absolutely continuous on $\Mod_X$-a.e.\@ curve. Such a result can be seen as stronger than the ACL condition for Sobolev functions on $\Rbb^n$.
%
%
\begin{theorem}
\label{thm:DX-subset-ACC}
If $u\in DX$, then $u\in \ACC_X(\Pcal)$.
\end{theorem}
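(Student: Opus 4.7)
The plan is to fix an upper gradient $g \in X$ of $u$ and exhibit an exceptional family of curves of zero $X$-modulus, outside of which $u \circ \gamma$ is real-valued and absolutely continuous.

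Let $\Gamma_1 = \{\gamma \in \Gamma(\Pcal) : \int_\gamma g\,ds = \infty\}$. Since $g$ is a non-negative Borel function lying in $X$, Proposition~\ref{pro:mod0_equiv} (applied with $\rho = g$) gives $\Mod_X(\Gamma_1) = 0$. Set $E = \{x \in \Pcal : |u(x)| = \infty\}$ and $\Gamma_2 = \{\gamma \in \Gamma(\Pcal) : \gamma \subset E\}$. Because $g$ is an upper gradient, the defining inequality~\eqref{eq:ug_def} holds on every curve and therefore on every subcurve. For $\gamma \in \Gamma_2$ the convention $|(\pm\infty) - (\pm\infty)| = \infty$ forces $\int_\gamma g\,ds = \infty$, so $\Gamma_2 \subset \Gamma_1$, and hence $\Mod_X(\Gamma_1 \cup \Gamma_2) = 0$ by Lemma~\ref{lem:mod_properties}\,\ref{lem:mod_properties:union_0}.

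Now fix $\gamma \in \Gamma(\Pcal) \setminus (\Gamma_1 \cup \Gamma_2)$ and set $f = u \circ \gamma$. Since $\gamma \not\subset E$, some $t_0 \in [0, l_\gamma]$ satisfies $u(\gamma(t_0)) \in \Rbb$; for any other $t \in [0, l_\gamma]$ the upper gradient inequality applied to the subcurve of $\gamma$ between $\min(t, t_0)$ and $\max(t, t_0)$ yields $|f(t) - f(t_0)| \le \int_\gamma g\,ds < \infty$, so $f$ is $\Rbb$-valued on $[0, l_\gamma]$. Arc-length parametrization gives $\int_0^{l_\gamma} g(\gamma(s))\,ds = \int_\gamma g\,ds < \infty$, whence $s \mapsto g(\gamma(s))$ is Lebesgue-integrable on $[0, l_\gamma]$. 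For any non-overlapping intervals $[a_1, b_1], \ldots, [a_n, b_n] \subset [0, l_\gamma]$, the upper gradient inequality applied to each subcurve $\gamma|_{[a_j, b_j]}$ gives
\[
  \sum_{j=1}^n |f(b_j) - f(a_j)| \le \sum_{j=1}^n \int_{a_j}^{b_j} g(\gamma(s))\,ds = \int_{\bigcup_j [a_j, b_j]} g(\gamma(s))\,ds.
\]
Standard absolute continuity of the Lebesgue integral then produces, for each $\eps > 0$, a $\delta > 0$ such that $\sum_j (b_j - a_j) < \delta$ forces the right-hand side below $\eps$; thus $f \in \AC([0, l_\gamma])$, and $u \in \ACC_X(\Pcal)$.

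The only subtle point is the bookkeeping around the convention $|(\pm\infty) - (\pm\infty)| = \infty$, which is precisely what allows the family $\Gamma_2$ of curves trapped inside the infinity set of $u$ to be absorbed into $\Gamma_1$ without assuming $u$ itself lies in $X$ or is finite quasi-everywhere; the remainder of the argument is routine once $\gamma \mapsto \int_\gamma g\,ds$ is recognized as finite off a modulus-zero family.
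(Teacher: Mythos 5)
Your proof is correct and follows essentially the same route as the paper: the exceptional family is the curves along which $\int_\gamma g\,ds=\infty$ (modulus zero by Proposition~\ref{pro:mod0_equiv}), finiteness of $u$ along the remaining curves comes from the upper gradient inequality on subcurves, and absolute continuity follows from integrability of $g\circ\gamma$ on $[0,l_\gamma]$. The only cosmetic differences are that your family $\Gamma_2$ is redundant (as you note, it is already contained in $\Gamma_1$), and that you invoke absolute continuity of the Lebesgue integral directly where the paper reaches the same conclusion by contradiction via dominated convergence.
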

\begin{proof}
Let $g\in X$ be an upper gradient of $u$ and let $\Gamma$ consist of those curves for which $\int_\gamma g\,ds = \infty$. Then, $\Mod_X(\Gamma) = 0$ due to Proposition~\ref{pro:mod0_equiv}.

Let $\gamma: [0, l_\gamma]\to\Pcal$ be a curve in $\Gamma(\Pcal)\setminus \Gamma$. For all $a,b\in[0, l_\gamma]$, $a<b$, we have
\begin{equation}
  \label{eq:u-ACC-gamma}
  |u(\gamma(b)) - u(\gamma(a))| \le \int_{\gamma|_{[a,b]}} g\,ds < \infty,
\end{equation}
and in particular $u(\gamma(a)) \in \Rbb$ for all $a\in [0, l_\gamma]$. Suppose now that $f \coloneq u\circ \gamma$ is not absolutely continuous on $[0, l_\gamma]$. Then, there is an $\eps>0$ such that for every $j\in\Nbb$ there are $0\le a_{j,1} < b_{j,1} \le a_{j,2}< \cdots \le a_{j,n_j} < b_{j,n_j} \le l_\gamma$ such that
\[
  \sum_{i=1}^{n_j} (b_{j,i} - a_{j,i}) < 2^{-j}, \quad \mbox{while} \quad 
  \sum_{i=1}^{n_j} |f(b_{j,i}) - f(a_{j,i})| \ge \eps.
\]
Letting $E_j = \bigcup_{i=1}^{n_j} [a_{j,i}, b_{j,i}]$, we obtain by \eqref{eq:u-ACC-gamma} and the dominated convergence theorem that
\[
  \eps \le \sum_{i=1}^{n_j} |f(b_{j,i}) - f(a_{j,i})| \le \int_{\gamma|_{E_j}} g\,ds \to 0\quad\mbox{as }j\to \infty
\]
since $\lambda^1(E_j) < 2^{-j}$. This contradiction shows that $f$ is necessarily absolutely continuous on $[0,l_\gamma]$, whence $u$ is absolutely continuous on every $\gamma \in \Gamma(\Pcal)\setminus\Gamma$.
\end{proof}
Having a function $u\in\ACC_X(\Pcal)$ and a curve $\gamma$ on which $u$ is absolutely continuous, one can compare the classical derivative of $u\circ \gamma$ with an arbitrary $X$-weak upper gradient. This leads to the following lemma, which provides us with yet another characterization of $X$-weak upper gradients lying in $X$.
%
%
\begin{lemma}
\label{lem:ACC-derivative-vs-ug}
Assume that $u\in\ACC_X(\Pcal)$ and that $g\in X$ is an $X$-weak upper gradient of $u$. Then, for $\Mod_X$-a.e.\@ curve $\gamma: [0, l_\gamma]\to \Pcal$, we have
\begin{equation}
\label{eq:ACC-derivative-vs-ug}
  |(u\circ \gamma)'(t)| \le g(\gamma(t)) \quad\mbox{for a.e.\@ }t\in[0, l_\gamma].
\end{equation}
Conversely, if $g\ge 0$ is measurable, $u\in\ACC_X(\Pcal)$, and \eqref{eq:ACC-derivative-vs-ug} holds for $\Mod_X$-a.e.\@ curve $\gamma: [0, l_\gamma]\to\Pcal$, then $g$ is an $X$-weak upper gradient of $u$.
\end{lemma}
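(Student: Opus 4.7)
The plan is to treat the two implications separately. Both rely on Newton--Leibniz on $\Rbb$ together with Lebesgue's differentiation theorem, applied to $u \circ \gamma$ for curves $\gamma$ outside a suitable $\Mod_X$-null family assembled via Lemma~\ref{lem:mod_properties}\,\ref{lem:mod_properties:union_0}.

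For the forward direction, I would first replace $g$ by a Borel representative $g'$ that equals $g$ a.e.\@ and is still an $X$-weak upper gradient of $u$ (this is available by the remark following the definition of $X$-weak upper gradient, together with Proposition~\ref{pro:meas_f_has_borel_repr}). Then I would discard three families, each of zero $X$-modulus: the exceptional family for $u \in \ACC_X(\Pcal)$, the family of curves along which $g'$ fails to be an upper gradient of $u$ (Corollary~\ref{cor:not_ug_along_curves}), the family $\bigl\{\gamma : \int_\gamma g'\,ds = \infty\bigr\}$ (Proposition~\ref{pro:mod0_equiv}, since $g' \in X$), and finally the family $\Gamma^+_{\{g \ne g'\}}$ (Lemma~\ref{lem:mod_G+_0_meas}). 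For any curve $\gamma : [0, l_\gamma] \to \Pcal$ outside this union, $u \circ \gamma \in \AC([0, l_\gamma])$, the function $g' \circ \gamma$ is integrable, and
\[
  |u(\gamma(t+h)) - u(\gamma(t))| \le \int_t^{t+h} g'(\gamma(s))\,ds
\]
for every $[t, t+h] \subset [0, l_\gamma]$. At a.e.\@ $t$, both $(u \circ \gamma)'(t)$ exists and $t$ is a Lebesgue point of $g' \circ \gamma$; dividing by $h$ and letting $h \to 0^+$ yields $|(u \circ \gamma)'(t)| \le g'(\gamma(t)) = g(\gamma(t))$, where the last equality holds at a.e.\@ $t$ since $\gamma \notin \Gamma^+_{\{g \ne g'\}}$.

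For the converse, suppose $u \in \ACC_X(\Pcal)$ and that $|(u \circ \gamma)'(t)| \le g(\gamma(t))$ for a.e.\@ $t$ holds for $\Mod_X$-a.e.\@ curve $\gamma$. Discarding the $\Mod_X$-null union of the exceptional family for $\ACC_X$ and the exceptional family given in the hypothesis, pick any remaining curve $\gamma$. Then $u \circ \gamma \in \AC([0, l_\gamma])$, so the Newton--Leibniz formula and the pointwise bound give
\[
  |u(\gamma(l_\gamma)) - u(\gamma(0))| = \Bigl| \int_0^{l_\gamma} (u \circ \gamma)'(t)\,dt \Bigr| \le \int_0^{l_\gamma} g(\gamma(t))\,dt = \int_\gamma g\,ds,
\]
so $g$ is an $X$-weak upper gradient of $u$.

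The main obstacle is the bookkeeping in the forward direction: to apply the Lebesgue differentiation theorem I need $g \circ \gamma \in L^1([0, l_\gamma])$ and I need $\int_\gamma g\,ds$ to be genuinely defined, which forces me to pass through a Borel representative and to exclude the curves where the path integral is infinite. Once those exceptional families have been collected and Lemma~\ref{lem:mod_properties}\,\ref{lem:mod_properties:union_0} has been invoked, the rest is the standard $h \to 0$ argument. The converse is essentially immediate.
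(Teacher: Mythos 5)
Your proposal is correct and follows essentially the same route as the paper: restrict to curves along which $u\circ\gamma$ is absolutely continuous, $g$ is an upper gradient of $u$ along the curve, and $\int_\gamma g\,ds<\infty$, then compare the difference quotient with the shrinking averages of $g\circ\gamma$ at Lebesgue points, and for the converse integrate $(u\circ\gamma)'$ via the Newton--Leibniz formula. The only difference is that you make the passage to a Borel representative $g'$ and the exclusion of $\Gamma^+_{\{g\neq g'\}}$ explicit, which the paper leaves implicit (and in the converse you should likewise discard, via Lemma~\ref{lem:mod_ae_equivalence}, the curves on which $\int_\gamma g\,ds$ is not well defined); this is harmless bookkeeping.
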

Observe that whereas we need to assume that $g\in X$ in the forward implication (cf.\@ Example~\ref{exa:cantor_infty_gradient} below), it is unnecessary in the converse.
\begin{proof}
Assume first that $u\in\ACC_X(\Pcal)$ and that $g\in X$ is an $X$-weak upper gradient of~$u$. Let $\gamma: [0, l_\gamma]\to\Pcal$ be a curve such that $u\circ \gamma \in \AC([0, l_\gamma])$, $g$ is an upper gradient of $u$ along $\gamma$, and $\int_\gamma g\,ds<\infty$. This holds for $\Mod_X$-a.e.\@ curve.

Now, almost every point $t\in(0,l_\gamma)$ is a Lebesgue point for $g\circ \gamma$ (so that the last equality in \eqref{eq:leb_point} below holds) and simultaneously $u\circ \gamma$ is differentiable at $t$. For such~$t$ we obtain
\begin{align}
  \label{eq:leb_point}
  |(u\circ \gamma)'(t)| & = \lim_{h\to 0} \Bigl| \frac{u(\gamma(t+h)) - u(\gamma(t))}{h}\Bigr|
  \\
  & \le \lim_{h\to 0} \frac{1}{h} \int_t^{t+h} g(\gamma(\tau))\,d\tau = g(\gamma(t)).
  \nonumber
\end{align}

Conversely, assume that $g\ge0$ is measurable, $u\in\ACC_X(\Pcal)$ and \eqref{eq:ACC-derivative-vs-ug} holds for $\Mod_X$-a.e.\@ curve $\gamma: [0, l_\gamma]\to\Pcal$. Let $\gamma: [0, l_\gamma]\to\Pcal$ be a curve on which $u$ is absolutely continuous, \eqref{eq:ACC-derivative-vs-ug} holds, and $\int_\gamma g\,ds$ is well defined. This holds for $\Mod_X$-a.e.\@ curve as can be seen by Lemma~\ref{lem:mod_ae_equivalence}. Then,
\[
  |u(\gamma(0)) - u(\gamma(l_\gamma))| \le \int_0^{l_\gamma}|(u\circ\gamma)'(t)|\,dt \le \int_0^{l_\gamma} g(\gamma(t))\,dt = \int_\gamma g\,ds.
  \qedhere
\]
\end{proof}
\begin{example}
\label{exa:cantor_infty_gradient}
Let $A \subset [0, 1]$ be a Borel set which satisfies $0<\lambda^1(A \cap I)<\lambda^1(I)$ for all non-degenerate intervals $I \subset [0,1]$. Then, $g= \infty \chi_A$ is an upper gradient of any function on $[0,1]$. Let $u(x) = x$ for $x\in [0,1]$. Hence, $(u\circ \gamma)'(t) = 1 \nleq 0 = g(\gamma(t))$ whenever $\gamma(t) \notin A$, which happens for $t$ chosen from a set of positive measure.
\end{example}
As a consequence of the last characterization, we can easily derive the product and chain rule for $X$-weak upper gradients. These rules can be proven analogously as~\cite[Theorems~2.15 and~2.16]{BjoBjo}, whence the proofs are omitted.
\begin{proposition}[Product rule]
Let $u_1, u_2 \in DX$ and let $g_1, g_2 \in X$ be their respective $X$-weak upper gradients. Then, $|u_1| g_2 + |u_2| g_1$ is an $X$-weak upper gradient of $u_1 u_2$.
\end{proposition}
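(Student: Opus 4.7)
The plan is to reduce the product rule to its classical counterpart for absolutely continuous functions on a real interval and then invoke the characterization of $X$-weak upper gradients via pointwise derivatives along curves (Lemma~\ref{lem:ACC-derivative-vs-ug}).

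First, I would note that $|u_1|g_2 + |u_2|g_1$ is clearly non-negative and measurable, so the only issue is to verify the defining integral inequality for $\Mod_X$-a.e.\@ curve. By Theorem~\ref{thm:DX-subset-ACC}, both $u_1, u_2 \in \ACC_X(\Pcal)$, and the lattice/algebra-type lemma preceding Theorem~\ref{thm:DX-subset-ACC} gives $u_1 u_2 \in \ACC_X(\Pcal)$. Discarding a countable union of exceptional curve families, which still has zero $X$-modulus by Lemma~\ref{lem:mod_properties}\,\ref{lem:mod_properties:union_0}, I may restrict attention to curves $\gamma:[0,l_\gamma]\to\Pcal$ along which $u_1\circ\gamma$, $u_2\circ\gamma$, and $(u_1 u_2)\circ\gamma$ are all absolutely continuous, and along which the forward direction of Lemma~\ref{lem:ACC-derivative-vs-ug} gives $|(u_i\circ\gamma)'(t)| \le g_i(\gamma(t))$ for a.e.\@ $t\in[0,l_\gamma]$ for $i=1,2$.

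On such a curve, the classical product rule for absolutely continuous functions (see, e.g., the reference to Bj\"{o}rn--Bj\"{o}rn used above) yields
\[
  ((u_1 u_2)\circ\gamma)'(t) = (u_1\circ\gamma)'(t)\,(u_2\circ\gamma)(t) + (u_1\circ\gamma)(t)\,(u_2\circ\gamma)'(t)
\]
for a.e.\@ $t\in[0,l_\gamma]$. The triangle inequality then gives
\[
  |((u_1 u_2)\circ\gamma)'(t)| \le |u_2(\gamma(t))|\,g_1(\gamma(t)) + |u_1(\gamma(t))|\,g_2(\gamma(t)) = (|u_1|g_2 + |u_2|g_1)(\gamma(t))
\]
for a.e.\@ $t\in[0,l_\gamma]$. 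Applying the converse direction of Lemma~\ref{lem:ACC-derivative-vs-ug} to $u_1 u_2$ concludes that $|u_1|g_2 + |u_2|g_1$ is an $X$-weak upper gradient of $u_1 u_2$.

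The only delicate point is the bookkeeping of exceptional curve families: one must throw away the exceptional sets for $u_1$ being ACC, for $u_2$ being ACC, for the two derivative inequalities, and for well-definedness of $\int_\gamma(|u_1|g_2 + |u_2|g_1)\,ds$. Each of these families has zero $X$-modulus, so by Lemma~\ref{lem:mod_properties}\,\ref{lem:mod_properties:union_0} their union does too, and the argument above goes through on the complement. I do not expect any genuine obstacle beyond this routine exceptional-set management; the finiteness of $u_1, u_2$ needed for the pointwise product rule is automatic on curves where $u_1\circ\gamma$ and $u_2\circ\gamma$ are absolutely continuous (hence real-valued).
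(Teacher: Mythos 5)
Your proof is correct and follows exactly the route the paper intends: the paper omits the proof, remarking only that the product rule is ``a consequence of the last characterization'' (Lemma~\ref{lem:ACC-derivative-vs-ug}) and can be proven as in Bj\"{o}rn--Bj\"{o}rn, which is precisely your reduction to the classical product rule for absolutely continuous functions along $\Mod_X$-a.e.\@ curve. Your handling of the exceptional curve families and of the finiteness of $u_1,u_2$ along good curves is sound.
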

\begin{proposition}[Chain rule]
Let $w: I \to \overline{\Rbb}$ be locally Lipschitz, where $I \subset \overline{\Rbb}$ is an interval. Let $u: \Pcal \to I$ and suppose that $u \in DX$ with an $X$-weak upper gradient $g \in X$. Then, $|w'\circ u| g$ is an $X$-weak upper gradient of $w \circ u$, where we set $w'=0$ wherever undefined.
\end{proposition}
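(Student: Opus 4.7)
The strategy is to verify the pointwise derivative criterion provided by Lemma~\ref{lem:ACC-derivative-vs-ug}. Set $v = w\circ u$ and $h = |w'\circ u|\,g$, where $w'$ is taken to be $0$ wherever undefined. Rademacher's theorem (applied locally on $I$) shows that $w'$ exists $\lambda^1$-a.e.\ on $I$, and since the $\limsup$ of difference quotients is Borel, we may choose a Borel representative; consequently $h$ is measurable.

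By the remark following Definition~\ref{df:DX} together with Theorem~\ref{thm:DX-subset-ACC}, the hypothesis $u\in DX$ gives $u \in \ACC_X(\Pcal)$. The forward implication of Lemma~\ref{lem:ACC-derivative-vs-ug} then furnishes a $\Mod_X$-null family $\Gamma_0$ of curves such that, for every $\gamma:[0,l_\gamma]\to\Pcal$ with $\gamma\notin\Gamma_0$, the composition $u\circ\gamma$ lies in $\AC([0,l_\gamma])$ and $|(u\circ\gamma)'(t)| \le g(\gamma(t))$ for a.e.\ $t$. For such a curve, the image $J \coloneq (u\circ\gamma)([0,l_\gamma])$ is a compact subinterval of $I$, on which the locally Lipschitz function $w$ is globally Lipschitz. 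Thus $v\circ\gamma = w\circ(u\circ\gamma)$ is the composition of an AC map with a Lipschitz map on its range, and therefore $v\circ\gamma \in \AC([0, l_\gamma])$; in particular $v\in\ACC_X(\Pcal)$.

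The chain rule for AC compositions then yields, for a.e.\ $t\in[0, l_\gamma]$,
\[
  (v\circ\gamma)'(t) = w'(u(\gamma(t)))\,(u\circ\gamma)'(t),
\]
with the convention $0\cdot\mbox{undefined}=0$. Combined with the a.e.\ bound on $|(u\circ\gamma)'|$, this gives $|(v\circ\gamma)'(t)| \le h(\gamma(t))$ for a.e.\ $t$. Since this holds for every $\gamma\notin\Gamma_0$ and $\Mod_X(\Gamma_0)=0$, the converse direction of Lemma~\ref{lem:ACC-derivative-vs-ug} shows that $h = |w'\circ u|\,g$ is an $X$-weak upper gradient of $v = w\circ u$.

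The principal obstacle is the a.e.\ validity of the chain rule above: if $w$ fails to be differentiable on a $\lambda^1$-null subset $N\subset J$, one must control the set $(u\circ\gamma)^{-1}(N)$ where $w'(u(\gamma(\cdot)))$ is undefined. This is resolved by the classical fact that for every $f\in\AC([a,b])$ and every $\lambda^1$-null $N\subset\Rbb$ the derivative $f'$ vanishes a.e.\ on $f^{-1}(N)$, so both sides of the chain rule identity vanish there under the convention $0\cdot\mbox{undefined}=0$; on the complementary set the standard pointwise chain rule applies. This is precisely the technical ingredient encapsulated in~\cite[Theorem~2.16]{BjoBjo} referenced by the author.
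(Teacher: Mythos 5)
Your argument is correct and follows exactly the route the paper intends: the paper omits the proof, noting only that the chain rule is "a consequence of the last characterization" and is proved as in \cite[Theorem~2.16]{BjoBjo}, and your proposal fills in precisely that argument — passing through $\ACC_X(\Pcal)$ via Theorem~\ref{thm:DX-subset-ACC}, applying both directions of Lemma~\ref{lem:ACC-derivative-vs-ug}, and handling the exceptional null set for $w'$ with the standard fact that an absolutely continuous function has vanishing derivative a.e.\ on the preimage of a $\lambda^1$-null set. No gaps.
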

Next, we will investigate how a pair of functions related by pointwise (in)equality a.e.\@ is affected by the fact that these functions belong to $\ACC_X(\Pcal)$, which will eventually lead to a description of the natural equivalence classes for Newtonian functions.
%
%
\begin{proposition}
\label{pro:ACC-eq.ae-eq.qe}
If $u,v \in \ACC_X(\Pcal)$ and $u = v$ a.e., then $u = v$ q.e.
\end{proposition}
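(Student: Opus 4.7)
The plan is to show that $C_X(E) = 0$ for $E = \{x \in \Pcal : u(x) \neq v(x)\}$, and then invoke Proposition~\ref{pro:cap0_meas0_mod0}, which reduces the claim to verifying the two conditions $\mu(E) = 0$ and $\Mod_X(\Gamma_E) = 0$. The first is immediate from the assumption $u = v$ a.e., so the heart of the argument is to bound $\Mod_X(\Gamma_E)$ from above by zero.

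To estimate $\Mod_X(\Gamma_E)$, I would introduce three auxiliary curve families, each of which has zero $X$-modulus:
\begin{itemize}
\item $\Gamma_1 = \{\gamma : u \circ \gamma \notin \AC([0, l_\gamma])\}$, with $\Mod_X(\Gamma_1) = 0$ since $u \in \ACC_X(\Pcal)$,
\item $\Gamma_2 = \{\gamma : v \circ \gamma \notin \AC([0, l_\gamma])\}$, with $\Mod_X(\Gamma_2) = 0$ since $v \in \ACC_X(\Pcal)$,
\item $\Gamma_E^+$, with $\Mod_X(\Gamma_E^+) = 0$ by Lemma~\ref{lem:mod_G+_0_meas}, using $\mu(E) = 0$.
\end{itemize}
By Lemma~\ref{lem:mod_properties}\,\ref{lem:mod_properties:union_0} the union $\Gamma_1 \cup \Gamma_2 \cup \Gamma_E^+$ has zero $X$-modulus, so it suffices (by monotonicity) to show the inclusion $\Gamma_E \subset \Gamma_1 \cup \Gamma_2 \cup \Gamma_E^+$.

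To verify the inclusion, consider any $\gamma \in \Gamma_E \setminus (\Gamma_1 \cup \Gamma_2 \cup \Gamma_E^+)$. Then both $u \circ \gamma$ and $v \circ \gamma$ are absolutely continuous on $[0, l_\gamma]$ (hence in particular continuous and finite-valued), while $\lambda^1(\gamma^{-1}(E)) = 0$. The set $[0, l_\gamma] \setminus \gamma^{-1}(E)$ is therefore dense in $[0, l_\gamma]$ and on this set $u \circ \gamma = v \circ \gamma$; by continuity, $u(\gamma(t)) = v(\gamma(t))$ for every $t \in [0, l_\gamma]$. This contradicts $\gamma \in \Gamma_E$, which asserts the existence of some $t$ with $\gamma(t) \in E$, i.e., $u(\gamma(t)) \neq v(\gamma(t))$. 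Hence no such $\gamma$ exists, giving the desired inclusion and completing the proof.

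I do not anticipate any serious obstacle here: the argument is a clean combination of ingredients already developed in the paper. The only subtlety worth flagging is that one must invoke continuity of $u\circ\gamma$ and $v\circ\gamma$ on a set that is merely of full $\lambda^1$-measure (not open), so density plus continuity is the key observation driving the pointwise equality on the whole parameter interval.
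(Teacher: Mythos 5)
Your proof is correct and follows essentially the same route as the paper: both reduce the claim to $\mu(E)=0$ and $\Mod_X(\Gamma_E)=0$ via Proposition~\ref{pro:cap0_meas0_mod0}, and both establish the latter by showing that any curve avoiding the exceptional families $\Gamma_E^+$ and the non-AC curves must miss $E$ entirely, using continuity of $u\circ\gamma$ and $v\circ\gamma$ together with a.e.\ equality along the curve. Your explicit density observation is exactly the step the paper leaves implicit.
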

\begin{proof}
Let $E = \{ x\in \Pcal: u(x) \neq v(x)\}$. Since $\mu(E) = 0$, we have $\Mod_X(\Gamma_E^+) = 0$ by Lemma~\ref{lem:mod_G+_0_meas}. Let $\Gamma = \{\gamma \in \Gamma(\Pcal) \setminus \Gamma_E^+: u\circ \gamma, v\circ \gamma \in \AC([0, l_\gamma])\}$, which gives $\Mod_X(\Gamma^{\C}) = 0$. Suppose $\gamma \in \Gamma$. Then, $\lambda^1(\gamma^{-1}(E)) = 0$, i.e., $u\circ\gamma=v\circ\gamma$ $\lambda^1$-a.e.\@ on $[0,l_\gamma]$. As both functions $u$ and $v$ are continuous on $\gamma$, we have $u=v$ everywhere on $\gamma$, whence $\gamma \cap E = \emptyset$. Consequently, $\Gamma_E \subset \Gamma^\C$ and $\Mod_X(\Gamma_E) \le \Mod_X(\Gamma^\C) = 0$. We can now conclude that $C_X(E) = 0$ due to Proposition~\ref{pro:cap0_meas0_mod0}.
\end{proof}
%
%
\begin{corollary}
\label{cor:ACC-ineq.ae-ineq.qe}
If $u,v \in \ACC_X(\Pcal)$ and $u \ge v$ a.e., then $u \ge v$ q.e.
\end{corollary}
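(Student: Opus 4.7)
The plan is to reduce the inequality statement to the equality statement proved in Proposition~\ref{pro:ACC-eq.ae-eq.qe} by replacing one of the functions with a suitable lattice combination. Specifically, I would consider $w = \max\{u, v\}$. The earlier lemma on lattice operations in $\ACC_X(\Pcal)$ shows that $w \in \ACC_X(\Pcal)$ since both $u$ and $v$ lie there.

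Next, the hypothesis $u \ge v$ a.e.\@ gives $w = u$ a.e., because at any point where $u(x) \ge v(x)$ one has $\max\{u(x), v(x)\} = u(x)$, and this occurs off a $\mu$-null set. Both $w$ and $u$ are in $\ACC_X(\Pcal)$, so Proposition~\ref{pro:ACC-eq.ae-eq.qe} applies and yields $w = u$ q.e., i.e., there is a set $N \subset \Pcal$ with $C_X(N) = 0$ such that $\max\{u(x), v(x)\} = u(x)$ for all $x \in \Pcal \setminus N$.

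Finally, for every $x \in \Pcal \setminus N$ the identity $\max\{u(x), v(x)\} = u(x)$ immediately implies $u(x) \ge v(x)$. Hence $u \ge v$ off a set of $X$-capacity zero, which is the desired conclusion.

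No real obstacle is expected; the only thing to be mindful of is that $u$ and $v$ are allowed to take extended real values, but the formula $\max\{u, v\} = u \iff u \ge v$ is valid pointwise in $\overline{\Rbb}$, so this causes no difficulty.
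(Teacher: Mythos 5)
Your proof is correct and is essentially the paper's own argument in dual form: the paper takes $w=\min\{u,v\}$ (so that $u\ge w$ everywhere and $w=v$ a.e.) while you take $w=\max\{u,v\}$ (so that $w\ge v$ everywhere and $w=u$ a.e.), and both reduce to Proposition~\ref{pro:ACC-eq.ae-eq.qe} via the lattice lemma in exactly the same way.
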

\begin{proof}
Let $w = \min\{u, v\}$. Then, $w\in \ACC_X(\Pcal)$, and $u\ge w$ everywhere in $\Pcal$ while $v=w$ a.e. We can see that $u\ge w=v$ q.e.\@ due to Proposition~\ref{pro:ACC-eq.ae-eq.qe}.
\end{proof}
%
%
\begin{corollary}
\label{cor:N1X-eq.ae-eq.qe}
Let $u,v \in \NX$. If $u \ge v$ a.e., then $u \ge v$ q.e. Furthermore, if $u = v$ a.e., then $u = v$ q.e.
\end{corollary}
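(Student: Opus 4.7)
The plan is to reduce this corollary directly to the already-established results about $\ACC_X(\Pcal)$. First I would observe that by definition any $u \in \NX$ has an upper gradient $g \in X$ (namely one that almost realizes the infimum in the Newtonian quasi-seminorm), so $u \in DX$. Hence $\NX \subset DX$, and by Theorem \ref{thm:DX-subset-ACC} we have $\NX \subset DX \subset \ACC_X(\Pcal)$.

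With both $u$ and $v$ belonging to $\ACC_X(\Pcal)$, the inequality assertion is immediate from Corollary \ref{cor:ACC-ineq.ae-ineq.qe}: the hypothesis $u \ge v$ a.e.\ together with $u,v \in \ACC_X(\Pcal)$ gives $u \ge v$ q.e. For the equality assertion, one could either invoke Proposition \ref{pro:ACC-eq.ae-eq.qe} directly, or deduce it by applying the inequality version twice (once with the roles of $u$ and $v$ exchanged) and combining the two q.e.\ inequalities using the countable subadditivity of $C_X$ (Theorem \ref{thm:CX-properties}\,\ref{itm:CX-subadditivity}) on the two exceptional sets.

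There is no real obstacle here; the entire content lies in the preceding machinery (Theorem \ref{thm:DX-subset-ACC}, Proposition \ref{pro:ACC-eq.ae-eq.qe}, Corollary \ref{cor:ACC-ineq.ae-ineq.qe}). The corollary is essentially a packaging statement, restricting the $\ACC_X$-level results to the subclass of Newtonian functions, which is exactly the form that will be most convenient for later sections (notably for identifying the natural equivalence classes in $\NtX$).
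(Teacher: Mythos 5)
Your proposal is correct and follows essentially the same route as the paper: observe that $\NX \subset DX \subset \ACC_X(\Pcal)$ via Theorem~\ref{thm:DX-subset-ACC}, then invoke Corollary~\ref{cor:ACC-ineq.ae-ineq.qe} for the inequality and Proposition~\ref{pro:ACC-eq.ae-eq.qe} for the equality. The alternative of deducing the equality case by applying the inequality twice also works, but is not needed.
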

\begin{proof}
Theorem~\ref{thm:DX-subset-ACC} shows that both $u$ and $v$ are absolutely continuous on $\Mod_X$-a.e.\@ curve. Corollary~\ref{cor:ACC-ineq.ae-ineq.qe} and Proposition~\ref{pro:ACC-eq.ae-eq.qe}, respectively, finish the proof.
\end{proof}
The following results prove that the equivalence classes in $\NtX$ are up to sets of $X$-capacity zero. Therefore, we can see that there is an actual difference between Newtonian spaces and Sobolev spaces on $\Rbb^n$ as the latter are defined with equivalence classes up to sets of measure zero. Remember that the capacity in general allows a finer distinction of sets of measure zero.
%
%
\begin{proposition}
\label{pro:N1X-zero-functions}
Let $u: \Pcal \to \overline{\Rbb}$ be a measurable function. Then, $\|u\|_{\NX} = 0$ if and only if $u=0$ q.e.
\end{proposition}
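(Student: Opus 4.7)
The plan is to handle the two implications separately, invoking the machinery already built up in the paper. For the forward direction, assume $\|u\|_\NX = 0$. Then in particular $\|u\|_X = 0$, so $u = 0$ a.e., and also $u \in \NX$ since $\|u\|_\NX < \infty$. The constant function $0$ trivially lies in $\NX$, so I would apply Corollary~\ref{cor:N1X-eq.ae-eq.qe} to the pair $u$ and $0$ to upgrade a.e.-equality to q.e.-equality, yielding $u = 0$ q.e.

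For the backward direction, assume $u = 0$ q.e., and let $E = \{x \in \Pcal : u(x) \neq 0\}$, so that $C_X(E) = 0$. By Proposition~\ref{pro:cap0_meas0_mod0}, both $\mu(E) = 0$ and $\Mod_X(\Gamma_E) = 0$. The first of these gives $u = 0$ a.e., hence $\|u\|_X = 0$. For the upper-gradient term, Proposition~\ref{pro:mod0_equiv} applied to $\Gamma_E$ produces a non-negative Borel function $\rho \in X$ with $\int_\gamma \rho\,ds = \infty$ for every $\gamma \in \Gamma_E$. The key step is to verify that $g_k := \rho/k$ is an (honest) upper gradient of $u$ for each $k \in \Nbb$: any curve $\gamma \in \Gamma_E$ automatically satisfies $|u(\gamma(0)) - u(\gamma(l_\gamma))| \le \infty = \int_\gamma g_k\,ds$, while any curve $\gamma \notin \Gamma_E$ misses $E$ entirely, so $u \circ \gamma \equiv 0$ and the left-hand side vanishes. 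Since $\|g_k\|_X = \|\rho\|_X / k \to 0$, the infimum of $\|g\|_X$ over upper gradients of $u$ is zero, so $\|u\|_\NX = 0$.

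There is no real obstacle; the statement is a direct corollary of the theory developed in Sections \ref{sec:cap}--\ref{sec:acc}. The only subtle point worth flagging is that the upper gradient constructed for the converse is forced to take the value $+\infty$ on curves that intersect $E$, but this is acceptable because the definition of upper gradient allows extended real values, and the convention $|(\pm\infty) - (\pm\infty)| = \infty$ causes no trouble since $u$ itself vanishes off the capacity-zero set $E$.
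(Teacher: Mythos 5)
Your argument is correct. The forward implication is identical to the paper's: from $\|u\|_{\NX}=0$ you get $u=0$ a.e.\@ and $u\in\NX$, and Corollary~\ref{cor:N1X-eq.ae-eq.qe} upgrades this to $u=0$ q.e. In the converse you take a genuinely different (and slightly more hands-on) route. The paper observes that $u=0$ q.e.\@ makes $0$ an $X$-weak upper gradient of $u$ via Corollary~\ref{cor:equal_qe_same_wug}, and then concludes $\|u\|_{\NX}\le\|u\|_X+\|0\|_X=0$, implicitly relying on Corollary~\ref{cor:def_N1X_via_WUGs} to compute the Newtonian quasi-seminorm through weak upper gradients. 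You instead bypass the weak-upper-gradient machinery entirely: from $\Mod_X(\Gamma_E)=0$ you invoke Proposition~\ref{pro:mod0_equiv} to produce a Borel $\rho\in X$ with infinite path integral over every curve meeting $E$, and check directly that $\rho/k$ is an honest upper gradient of $u$, since curves missing $E$ see only the zero function. This is in effect an unwinding, in this special case, of the approximation argument behind Lemma~\ref{lem:ug-approx-wug} and Corollary~\ref{cor:def_N1X_via_WUGs}; what it buys is a proof that works straight from the original definition of $\|\cdot\|_{\NX}$ via genuine upper gradients, at the cost of a few extra lines. One cosmetic remark: it is not the function $\rho/k$ that ``takes the value $+\infty$'' on curves through $E$ (it lies in $X$, hence is finite a.e.), but rather its path integral over such curves that is infinite; this does not affect the validity of your argument.
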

\begin{proof}
Assume first that $\|u\|_{\NX} = 0$. Then, $u\in \NX$ and $u=0$ a.e.\@ as $\|u\|_X = 0$. Thus, $u=0$ q.e.\@ by Corollary~\ref{cor:N1X-eq.ae-eq.qe}.

Assume, on the other hand, that $E=\{x\in\Pcal: u(x) \neq 0\}$ satisfies $C_X(E) = 0$. Then, $\mu(E) = 0$ by Proposition~\ref{pro:cap0_meas0_mod0} while $0$ is an $X$-weak upper gradient of $u$ by Corollary~\ref{cor:equal_qe_same_wug}. Therefore, $\|u\|_{\NX} \le \|u\|_X + \|0\|_X = 0$.
\end{proof}
%
%
\begin{corollary}
\label{cor:nat.eq.classes}
The equivalence classes in $\NtX$ are given by equality up to sets of capacity zero.
\end{corollary}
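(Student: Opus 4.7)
The plan is to deduce this directly from Proposition~\ref{pro:N1X-zero-functions} applied to the difference $u-v$. By definition of the equivalence relation on $\NtX$, two functions $u,v \in \NX$ are equivalent if and only if $\|u-v\|_\NX = 0$. So the claim reduces to showing that $\|u-v\|_\NX = 0$ if and only if $u = v$ q.e.

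First I would address the minor technical issue that $u-v$ need not be defined everywhere, since Newtonian functions are only guaranteed to be finite q.e.\ (Proposition~\ref{pro:N1X-finite-qe}). Let $F = \{x\in\Pcal : |u(x)|=\infty\} \cup \{x\in\Pcal: |v(x)|=\infty\}$; then $C_X(F) = 0$ by Proposition~\ref{pro:N1X-finite-qe} and Theorem~\ref{thm:CX-properties}\,\ref{itm:CX-subadditivity}. Define $w \coloneq u - v$ on $\Pcal \setminus F$ and extend $w$ by zero on $F$. Then $w$ is measurable, and by Lemma~\ref{lem:wug.linearity} together with Corollary~\ref{cor:equal_qe_same_wug}, $w$ has the same $X$-weak upper gradients (up to modification on a null set) as the formal difference $u-v$, so $\|w\|_\NX = \|u-v\|_\NX$.

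Now apply Proposition~\ref{pro:N1X-zero-functions}: $\|w\|_\NX = 0$ holds precisely when $w = 0$ q.e. Since $C_X(F) = 0$, we have $w = 0$ q.e.\ if and only if $u = v$ q.e. Combining these observations yields $u \sim v$ in $\NtX$ if and only if $u = v$ outside a set of $X$-capacity zero, which is the desired characterization of the equivalence classes.

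There is no real obstacle here beyond the bookkeeping of extending $u - v$ across the capacity-zero set where the subtraction is not literally defined; the substantive content is entirely packaged in Proposition~\ref{pro:N1X-zero-functions}, which in turn rested on Corollary~\ref{cor:N1X-eq.ae-eq.qe} and Proposition~\ref{pro:cap0_meas0_mod0}.
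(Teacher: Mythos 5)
Your proof is correct and follows essentially the same route as the paper: the equivalence $u\sim v$ is by definition $\|u-v\|_{\NX}=0$, and Proposition~\ref{pro:N1X-zero-functions} applied to $u-v$ gives that this holds if and only if $u=v$ q.e. The extra bookkeeping you do for the capacity-zero set where $u-v$ is formally undefined is a reasonable precaution, but the paper simply invokes Proposition~\ref{pro:N1X-zero-functions} directly without it.
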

\begin{proof}
  The claim follows from the definition of $u\sim v$ by condition $\|u-v\|_{\NX} = 0$, which holds if and only if $u-v = 0$ q.e.\@ as has been shown in Proposition~\ref{pro:N1X-zero-functions}.
\end{proof}
Next, we introduce a space of equivalence classes described by equality a.e.\@ with a Newtonian function, which is a closer counterpart of classical Sobolev spaces.
%
%
\begin{definition}
Let us introduce the space of equivalence classes given by a.e.\@ equality to Newtonian functions, i.e.,
\[ 
  \NhX = \{u \in X: \mbox{ there is $v\in \NX$ such that $u=v$ a.e.}\},
\]
with the norm induced by $\NX$ (so that $\|u\|_{\NhX} = \|v\|_{\NX}$, whenever $u=v$ a.e., while $u\in \NhX$ and $v \in \NX$). Observe that the norm is well defined due to Corollaries~\ref{cor:N1X-eq.ae-eq.qe} and~\ref{cor:nat.eq.classes}.
\end{definition}
The following proposition quantifies the difference between equivalence classes of functions in $\NhX$ and functions in $\NX$. Roughly speaking, $\NX$ consists only of the ``good'' representatives of classes in $\NhX$.
%
%
\begin{proposition}
Let $u\in \NhX$, then $u\in \NX$ if and only if $u\in \ACC_X(\Pcal)$.
\end{proposition}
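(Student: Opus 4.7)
The plan is to exploit the two tools already established: Theorem~\ref{thm:DX-subset-ACC}, which transforms membership in $DX$ (hence in $\NX$) into absolute continuity on a.e.\ curve, and Proposition~\ref{pro:ACC-eq.ae-eq.qe}, which upgrades a.e.\ equality to q.e.\ equality once both functions are known to lie in $\ACC_X(\Pcal)$. The whole statement then reduces to assembling these two results together with Corollary~\ref{cor:equal_qe_same_wug}.

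The forward implication is essentially immediate: if $u \in \NX$, then $u \in DX$ by definition, so Theorem~\ref{thm:DX-subset-ACC} yields $u \in \ACC_X(\Pcal)$.

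For the converse, I would start with $u \in \NhX$ such that $u \in \ACC_X(\Pcal)$. By definition of $\NhX$, there exists $v \in \NX$ with $u = v$ a.e., and Theorem~\ref{thm:DX-subset-ACC} applied to $v$ gives $v \in \ACC_X(\Pcal)$. Now both $u$ and $v$ lie in $\ACC_X(\Pcal)$ and coincide a.e., so Proposition~\ref{pro:ACC-eq.ae-eq.qe} promotes this to $u = v$ q.e. Corollary~\ref{cor:equal_qe_same_wug} then guarantees that every $X$-weak upper gradient of $v$ is an $X$-weak upper gradient of $u$. Combining this with the fact that $\|u\|_X = \|v\|_X$ (since $u=v$ a.e.) and the weak-upper-gradient formula of Corollary~\ref{cor:def_N1X_via_WUGs}, I obtain $\|u\|_{\NX} \le \|v\|_{\NX} < \infty$, so $u \in \NX$.

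There is no real obstacle here, only a small care point: the non-trivial step is the upgrade from a.e.\ equality to q.e.\ equality, which is exactly the content of Proposition~\ref{pro:ACC-eq.ae-eq.qe}, and relies crucially on both representatives being absolutely continuous along $\Mod_X$-a.e.\ curve; that is why the assumption $u \in \ACC_X(\Pcal)$ cannot be dropped even though $v$ is already in $\ACC_X(\Pcal)$ for free.
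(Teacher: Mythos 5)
Your proof is correct and follows essentially the same route as the paper: necessity via Theorem~\ref{thm:DX-subset-ACC}, and sufficiency by taking $v\in \NX$ with $u=v$ a.e.\@ and upgrading to $u=v$ q.e.\@ through Proposition~\ref{pro:ACC-eq.ae-eq.qe}. The only cosmetic difference is the final step, where the paper invokes Proposition~\ref{pro:N1X-zero-functions} to get $\|u-v\|_{\NX}=0$, while you reach the same conclusion by transferring the weak upper gradients of $v$ to $u$ via Corollaries~\ref{cor:equal_qe_same_wug} and~\ref{cor:def_N1X_via_WUGs}; both are valid one-line finishes.
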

\begin{proof}
  Theorem~\ref{thm:DX-subset-ACC} gives the necessity.
  
  Consider now $u\in \NhX \cap \ACC_X(\Pcal)$. Then, there is $v\in \NX \subset \ACC_X(\Pcal)$ such that $u=v$ a.e. Proposition~\ref{pro:ACC-eq.ae-eq.qe} implies that $u=v$ q.e., whence $\|u-v\|_{\NX} = 0$ by Proposition~\ref{pro:N1X-zero-functions}, and hence $u \in \NX$.
\end{proof}
\begin{example}
If we consider $X\subset L^1_\loc(\Rbb)$, then $\chi_\Qbb \notin \NX$ as it does not have any upper gradient in $L^1_\loc(\Rbb)$. On the other hand, $\chi_\Qbb = 0$ a.e.\@ on $\Rbb$, whence $\chi_\Qbb \in \NhX$.
\end{example}
In the Euclidean case, we have $\widehat{N}^1 L^p = W^{1,p}$. Indeed, Ohtsuka has shown in~\cite[Sections~4.3 and~4.4]{Oht} that an $L^p$ function lies in the Sobolev space $W^{1,p}$ if and only if it has an $\ACC_{L^p}$ representative whose gradient is integrable to the $p$th power. The description of zero $L^p$-modulus of a family of curves in his text corresponds to our Proposition~\ref{pro:mod0_equiv}\,\ref{pro:mod0_equiv.b}. 
%
%
%
%
\section{Completeness of Newtonian spaces}
\label{sec:banach}
%
%
We shall further see that the general setting of quasi-Banach function lattices suffices to prove that Newtonian spaces are in fact complete. The proof relies heavily on the fact that the equivalence classes in $\NtX$ are given by equality up to sets of capacity zero.
%
%
\begin{theorem}
\label{thm:N1X-complete}
The Newtonian space $\NtX$ is complete.
\end{theorem}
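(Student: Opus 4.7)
My plan is to mimic the classical Fuglede-style proof of completeness for Sobolev/Newtonian spaces, adapted to the quasi-Banach function lattice setting so that the modulus of concavity $c\ge 1$ enters the bookkeeping through the Riesz--Fischer property~\ref{df:qBFL.RF}.

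Let $\{u_n\}$ be a Cauchy sequence in $\NtX$. First I would pass to a subsequence $\{u_{n_k}\}$ satisfying $\|u_{n_{k+1}} - u_{n_k}\|_\NX < (2c)^{-k}$; it then suffices to prove that this subsequence converges, since a Cauchy sequence with a convergent subsequence converges. For each $k$ pick an upper gradient $g_k \in X$ of $u_{n_{k+1}} - u_{n_k}$ (or a representative thereof defined everywhere) with $\|g_k\|_X < (2c)^{-k}$. By Riesz--Fischer applied to the non-negative functions $|u_{n_{k+1}} - u_{n_k}|$ and to $g_k$, both $v\coloneq \sum_{k=1}^\infty |u_{n_{k+1}} - u_{n_k}|$ and $G\coloneq \sum_{k=1}^\infty g_k$ belong to $X$, with norms bounded by $\sum_k c^k(2c)^{-k} = 1$.

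Next I would identify an exceptional set of $X$-capacity zero off of which everything behaves. Let $E_0 = \{x: v(x) = \infty\}$; by Proposition~\ref{pro:N1X-finite-qe} applied to (a suitable truncation of) $v$, or more directly by the argument in Proposition~\ref{pro:cap0_meas0_mod0} using that $v, G \in X$, the set $E_0$ together with the family $\Gamma_G$ of curves on which $\int_\gamma G\,ds = \infty$ satisfies $\mu(E_0) = 0$ and $\Mod_X(\Gamma_G) = 0$, so $C_X(E_0) = 0$ and hence $\gamma \cap E_0 = \emptyset$ for $\Mod_X$-a.e.\@ curve. Define $u(x) = u_{n_1}(x) + \sum_{k=1}^\infty (u_{n_{k+1}}(x) - u_{n_k}(x))$ on $\Pcal \setminus E_0$ and $u(x) = 0$ on $E_0$; then $u_{n_k} \to u$ q.e., and $u \in X$ since $|u| \le |u_{n_1}| + v$ a.e.

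The core step is to show that $h_k \coloneq \sum_{j \ge k} g_j$ is an $X$-weak upper gradient of $u - u_{n_k}$. For any curve $\gamma$ avoiding $E_0$ and lying outside $\Gamma_G$ (which is $\Mod_X$-a.e.\@ curve), the partial sums $S_m = \sum_{j=k}^{m} (u_{n_{j+1}} - u_{n_k})$ have $\sum_{j=k}^m g_j$ as upper gradient along $\gamma$ by Lemma~\ref{lem:ug.linearity}, and since $\int_\gamma G\,ds < \infty$ the endpoint values $S_m(\gamma(0)), S_m(\gamma(l_\gamma))$ converge to $(u - u_{n_k})(\gamma(0))$, $(u - u_{n_k})(\gamma(l_\gamma))$; passing to the limit in $|S_m(\gamma(0)) - S_m(\gamma(l_\gamma))| \le \int_\gamma \sum_{j=k}^m g_j\,ds \le \int_\gamma h_k\,ds$ gives the weak upper gradient inequality. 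Then
\[
  \|u - u_{n_k}\|_\NX \le \|u - u_{n_k}\|_X + \|h_k\|_X,
\]
and both terms tend to $0$: the first because $|u - u_{n_k}| \le \sum_{j \ge k}|u_{n_{j+1}} - u_{n_j}|$ has $X$-norm bounded by $\sum_{j\ge k} c^{j-k+1}(2c)^{-j} \to 0$ (Riesz--Fischer again), and the second by the same computation applied to $h_k$. This yields $u_{n_k} \to u$ in $\NtX$ and hence $u_n \to u$, completing the proof.

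The main obstacle I anticipate is the careful handling of the pointwise versus q.e.\@ distinction: one must verify that $u$ is defined in a way compatible with the upper gradient inequality (which is strictly pointwise along curves), and that the switching between upper gradients and $X$-weak upper gradients, combined with the quasi-norm constants $c^k$, is tight enough that the Riesz--Fischer summation genuinely closes. Using Corollary~\ref{cor:def_N1X_via_WUGs} to work with $X$-weak upper gradients from the start (rather than insisting on Borel upper gradients at every stage) should streamline this.
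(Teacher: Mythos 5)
Your proof is correct and follows essentially the same route as the paper: pass to a rapidly convergent subsequence, define the limit by the telescoping series off a capacity-null exceptional set, and show that the tail sums $\sum_{j\ge k} g_j$ are $X$-weak upper gradients of $u-u_{n_k}$ with norms driven to zero by the Riesz--Fischer bookkeeping with the constants $c^j$. The only real difference is in how the exceptional set is produced: the paper runs a Borel--Cantelli argument on the level sets $E_j=\{|u_{j+1}-u_j|>2^{-j}\}$ using the quasi-subadditivity of $C_X$ (which also yields the explicit bounds $C_X(F_k)\le (2c)^{1-k}$ that are reused in the Egorov-type Corollary~\ref{cor:subseq_converges_pointwise}), whereas you take the divergence set of $v=\sum_k|u_{n_{k+1}}-u_{n_k}|$ and invoke the modulus characterization of null capacity from Proposition~\ref{pro:cap0_meas0_mod0}; both are valid, and your $E_0$ even absorbs the points where some $u_{n_k}$ is infinite thanks to the convention $|(\pm\infty)-(\pm\infty)|=\infty$.
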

\begin{proof}
Recall that $\NtX = \NX / \mathord{\sim} = \NX / \mathord{=}_{\mathrm{q.e.}}$ as has been proven in Corollary~\ref{cor:nat.eq.classes}. Let $\{u_j\}_{j=1}^\infty$ be a Cauchy sequence in $\NX$. Without loss of generality we may assume (by passing to a subsequence if necessary) that $\|u_{j+1}-u_j\|_{\NX} < (4c)^{-j}$, where $c\ge1$ is the modulus of concavity of $\NX$. Let
\[
  E_j = \{ x\in \Pcal: |u_{j+1}(x) - u_j(x)| > 2^{-j}\}.
\]
Consequently, we can estimate $C_X(E_j) \le \|2^j (u_{j+1} - u_j)\|_{\NX} \le 2^j (4c)^{-j} = (2c)^{-j}$. Let $F=\limsup_{j\to\infty} E_j$, i.e.,
\[
  F= \bigcap_{k=1}^\infty F_k, \quad \mbox{where } F_k = \bigcup_{j=k}^\infty E_j.
\]
Hence, $C_X(F_k) \le \sum_{j=k}^\infty c^{j-k+1} C_X(E_j) \le \sum_{j=k}^\infty c^{j-k+1} (2c)^{-j} = (2c)^{1-k}$ by Theorem~\ref{thm:CX-properties}. Therefore, 
$C_X(F)=0$ as $C_X(F) \le C_X(F_k)$ for all $k\in\Nbb$. Let $x\in \Pcal \setminus F$, then $x\in \Pcal \setminus F_m$ for some $m$ whence $|u_{j+1}(x) - u_j(x)| \le 2^{-j}$ for all $j\ge m$ and $\{u_j(x)\}_{j=1}^\infty$ forms a Cauchy sequence in $\Rbb$. Hence, we can define
\[
  u(x) = \begin{cases}
           \lim_{j\to\infty} u_j(x) & \mbox{for } x\in \Pcal \setminus F, \\
           0 & \mbox{for } x \in F.
         \end{cases}
\]
Observe that $u(x)$ is defined as the limit for q.e.\@ $x\in\Pcal$. Moreover, we have
\begin{equation}
  \label{eq:pointwise_limit}
  \lim_{j\to\infty} u_j(x) = u_k(x) + \sum_{j=k}^\infty (u_{j+1}(x) - u_j(x)), \quad x\in\Pcal \setminus \widetilde{F},
\end{equation}
where $\widetilde{F} = F \cup \bigcup_{j=1}^\infty \{x\in \Pcal: |u_j(x)| = \infty\}$ while $k\in \Nbb$ may be chosen arbitrarily. Proposition~\ref{pro:N1X-finite-qe} and Theorem~\ref{thm:CX-properties} yield that $C_X(\widetilde{F})=0$, and hence $\mu(\widetilde{F})=0$. Thus, $\|u-u_k\|_X \le \sum_{j=k}^\infty (4c)^{-j} = (4c)^{1-k}/(4c-1)$, so $u_k \to u$ in $X$ as $k\to\infty$. Since $C_X(\widetilde{F}) = 0$, $\Mod_X$-a.e.\@ curve in $\Pcal$ has an empty intersection with $\widetilde{F}$ by Pro\-po\-si\-tion~\ref{pro:cap0_meas0_mod0}. Let $\gamma$ be one such curve, connecting $x=\gamma(0)$ and $z=\gamma(l_\gamma)$. As $u$ is defined by \eqref{eq:pointwise_limit} on $\gamma$, we have
\begin{align*}
  |(u-u_k)(x) - (u-u_k)(z)| & \le \sum_{j=k}^\infty | (u_{j+1}-u_j)(x) - (u_{j+1}-u_j)(z)| \\
  & \le \sum_{j=k}^\infty \int_\gamma g_j\,ds = \int_\gamma \sum_{j=k}^\infty g_j\,ds,
\end{align*}
where $g_j$ is an upper gradient of $u_{j+1}-u_j$ such that $\|g_j\|_X < (2c)^{-j}$. Therefore, $\tilde{g}_k = \sum_{j=k}^\infty g_j$ is an $X$-weak upper gradient of $u-u_k$, and $\|\tilde{g}_k\|_X \le \sum_{j=k}^\infty c^{j-k+1} (2c)^{-j} = (2c)^{1-k}$. Finally, it follows that
\[
  \|u-u_k\|_{\NX} \le \|u-u_k\|_X + \|\tilde{g}_k\|_X \le \frac{(4c)^{1-k}}{4c-1} + (2c)^{1-k} \to 0 \quad\mbox{as }k\to\infty.
\qedhere
\]
\end{proof}
Finally, we will investigate what consequences the convergence in $\NX$ has on pointwise and uniform convergence of a sequence of functions. A Egorov-type theorem can be considered contained in the following corollary.
%
%
\begin{corollary}
\label{cor:subseq_converges_pointwise}
Assume that $u_j \to u$ in $\NX$ as $j\to \infty$. Then, there is a subsequence which converges to $u$ pointwise q.e. Moreover, for every $\eps>0$ there is a set $E$ with $C_X(E)<\eps$, such that the subsequence converges uniformly to $u$ outside of $E$. If all functions $u_j$ are continuous, then there is an open set $G$ with $C_X(G)<\eps$, such that the subsequence converges uniformly outside of $G$ (not necessarily to $u$, though).
\end{corollary}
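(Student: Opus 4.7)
The plan is to recycle the construction in the proof of Theorem~\ref{thm:N1X-complete} and superimpose a uniqueness argument. Since $\{u_j\}$ converges in $\NX$, it is Cauchy, so by passing to a subsequence (still denoted $\{u_j\}$) I may assume $\|u_{j+1} - u_j\|_\NX < (4c)^{-j}$, where $c$ is the modulus of concavity. Setting $E_j = \{x\in\Pcal : |u_{j+1}(x) - u_j(x)| > 2^{-j}\}$ and $F_k = \bigcup_{j\ge k} E_j$, the estimate $C_X(E_j) \le \|2^j(u_{j+1}-u_j)\|_\NX \le (2c)^{-j}$ together with the $\sigma$-quasi-additivity of $C_X$ (Theorem~\ref{thm:CX-properties}) gives $C_X(F_k) \le (2c)^{1-k}$, and hence $C_X(F) = 0$ for $F = \bigcap_k F_k$.

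For any $x \in \Pcal \setminus F_k$ and any $m > \ell \ge k$ the telescoping bound
\[
  |u_m(x) - u_\ell(x)| \le \sum_{j=\ell}^{m-1} |u_{j+1}(x) - u_j(x)| \le \sum_{j=\ell}^{m-1} 2^{-j} < 2^{1-\ell}
\]
shows $\{u_j\}$ is uniformly Cauchy on $\Pcal \setminus F_k$. This defines a pointwise limit $\tilde u$ on $\Pcal \setminus F$, and the proof of Theorem~\ref{thm:N1X-complete} additionally yields $u_j \to \tilde u$ in $\NX$. Combined with the hypothesis $u_j \to u$ in $\NX$, this forces $\|u - \tilde u\|_\NX = 0$, whence $u = \tilde u$ q.e.\@ by Proposition~\ref{pro:N1X-zero-functions}. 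So the chosen subsequence converges to $u$ pointwise q.e., which settles the first claim.

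For the $\eps$-statement, given $\eps > 0$, choose $k$ so large that $c\,(2c)^{1-k} < \eps$, and set $E = F_k \cup \{x \in \Pcal : \tilde u(x) \neq u(x)\}$. The second set has capacity zero, so $C_X(E) < \eps$ by Theorem~\ref{thm:CX-properties}\,\ref{itm:CX-subadditivity}. On $\Pcal \setminus E$ the subsequence converges uniformly to $\tilde u = u$ by the telescoping estimate above. When all $u_j$ are continuous, each difference $u_{j+1} - u_j$ is continuous, so $E_j$ is open as a preimage of an open set, and hence so is $F_k$. Taking $G = F_k$ produces an open exceptional set with $C_X(G) < \eps$; the subsequence converges uniformly on $\Pcal \setminus G$ to $\tilde u$, but since the (possibly nonempty) zero-capacity set $\{\tilde u \neq u\}$ need not lie in the open set $G$, we cannot promise the uniform limit is $u$ itself — which matches the parenthetical qualification in the statement.

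The only genuinely nontrivial step is the identification of the explicitly constructed pointwise limit $\tilde u$ with the given $\NX$-limit $u$; this is precisely where we use the fact, established via Proposition~\ref{pro:N1X-zero-functions}, that limits in $\NtX$ are unique up to sets of $X$-capacity zero rather than merely up to sets of measure zero. Everything else is bookkeeping on top of the construction already carried out in Theorem~\ref{thm:N1X-complete}.
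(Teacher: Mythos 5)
Your proposal is correct and follows essentially the same route as the paper: reuse the sets $E_j$, $F_k$, $F$ and the subsequence from the proof of Theorem~\ref{thm:N1X-complete}, obtain uniform convergence off $F_k$ from the telescoping estimate, and identify the pointwise limit $\tilde u$ with $u$ via $\|u-\tilde u\|_{\NX}=0$ and Proposition~\ref{pro:N1X-zero-functions}. The only difference is that you spell out the bookkeeping (the choice of $k$ with $c(2c)^{1-k}<\eps$ and the inclusion of the zero-capacity set $\{\tilde u\neq u\}$ into $E$) that the paper leaves implicit.
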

\begin{proof}
Let us define the sets $E_j$ and $F_k$ as in the proof of Theorem~\ref{thm:N1X-complete}, $j,k\in\Nbb$. We have obtained there a subsequence (denoted by $\{u_j\}_{j=1}^\infty$ again) which converges uniformly to some function $\tilde{u}$ on $\Pcal \setminus F_k$ for any $k\in\Nbb$, and thus pointwise on $\Pcal \setminus F$, i.e., q.e.\@ on $\Pcal$. Due to the construction of the function $\tilde{u}$ (which is denoted by $u$ in the aforementioned proof), we see that $\tilde{u}\in \NX$, whence $\|u-\tilde{u}\|_{\NX} = 0$ and Proposition~\ref{pro:N1X-zero-functions} then yields that $u=\tilde{u}$ q.e.

If all functions $u_j$ are continuous, then all sets $E_j$, and consequently $F_k$, are open. Therefore, $G$ can be defined as $F_k$ for a suitably large $k$, and $u_j \to \tilde{u}$ uniformly outside of $G$.
\end{proof}
%
%
%
%

\end{document}